\date{\today}
\theoremstyle{theorem}
    \newtheorem{theorem}{Theorem}
    \newtheorem{lemma}[theorem]{Lemma}
\theoremstyle{definition} 
    \newtheorem{definition}[theorem]{Definition}
    \newtheorem{result}[theorem]{Result}
    \newtheorem{remark}[theorem]{Remark}
    \newtheorem{example}[theorem]{Example}
    \newtheorem{exercise}[theorem]{Exercise}
\def\suchthat{\; : \;}
\def\tends{\rightarrow}
\def\l{\left}
\def\r{\right}
\def\<{\langle}
\def\>{\rangle}
\newcommand{\E}{\mbox{E}}
\newcommand\Tr{{\mbox{Tr}}}
\newcommand\mnote[1]{} 
\newcommand\be{\begin{equation*}}
\newcommand\ee{\end{equation*}}
\newcommand\ben{\begin{equation}}
\newcommand\een{\end{equation}}
\newcommand\bes{\begin{eqnarray*}}
\newcommand\ees{\end{eqnarray*}}
\newcommand\bex{\begin{exercise}}
\newcommand\eex{\end{exercise}}
\newcommand\beg{\begin{example}}
\newcommand\eeg{\end{example}}
\newcommand\benu{\begin{enumerate}}
\newcommand\eenu{\end{enumerate}}
\newcommand\beit{\begin{itemize}}
\newcommand\eeit{\end{itemize}}
\newcommand\berk{\begin{remark}}
\newcommand\eerk{\end{remark}}
\newcommand\bdefn{\begin{defintion}}
\newcommand\edefn{\end{definition}}
\newcommand\bthm{\begin{theorem}}
\newcommand\ethm{\end{theorem}}
\newcommand\bprf{\begin{proof}}
\newcommand\eprf{\end{proof}}
\newcommand\blem{\begin{lemma}}
\newcommand\elem{\end{lemma}}
\newcommand{\Cov}{\mbox{\rm Cov}}
\newcommand{\sm}{{\raise0.3ex\hbox{$\scriptstyle \setminus$}}}
\def\l{\left}
\def\r{\right}
\def\tends{\rightarrow}
\def\CHI{\mathchoice%
{\raise2pt\hbox{$\chi$}}%
{\raise2pt\hbox{$\chi$}}%
{\raise1.3pt\hbox{$\scriptstyle\chi$}}%
{\raise0.8pt\hbox{$\scriptscriptstyle\chi$}}}
\def\smalloplus{\raise1pt\hbox{$\,\scriptstyle \oplus\;$}}
\title[linear eigenvalue statistics of reverse circulant matrices]{Fluctuation of linear eigenvalue statistics  of reverse circulant matrices with  independent entries}
\author{Shambhu Nath Maurya}
\address{Department of Mathematics\\
	Indian Institute of Technology Bombay\\
	Powai, Mumbai, Maharashtra 400076, India}
\email{snmaurya [at] math.iitb.ac.in}
\author{Koushik Saha}
\address{Department of Mathematics\\
        Indian Institute of Technology Bombay\\
         Powai, Mumbai, Maharashtra 400076, India}
\email{koushik.saha [at] iitb.ac.in}
\date{\today}
\thanks{The work of Shambhu Nath Maurya is partially supported by UGC Doctoral Fellowship, India and the work of Koushik Saha is partially supported by MATRICS grant of SERB, Department of Science and Technology, Government of India.}
\begin{document}

\begin{abstract}
In this article, we study the fluctuations of linear eigenvalue statistics of reverse circulant $(RC_n)$ matrices  with independent entries which satisfy some moment conditions.
We show that $\frac{1}{\sqrt{n}} \Tr \phi(RC_n)$ obey the central limit theorem (CLT) type result, where 
$\phi$ is a nice test function.
\end{abstract}

\maketitle

\noindent{\bf Keywords :} Reverse circulant matrix, linear eigenvalue statistics, weak convergence, central limit theorem, Trace formula, Wick's formula.
\section{ introduction and main results}
Let $A_n$ be an $n\times n$ matrix with real or complex entries. The linear statistics of
eigenvalues $\lambda_1,\lambda_2,\ldots, \lambda_n$ of $A_n$ is a
function of the form
\begin{equation} \label{eqn:1}
\frac{1}{n}\sum_{k=1}^{n}\phi(\lambda_k)
\end{equation}
where $\phi$ is some  fixed function. The function $\phi$ is known as the test function. The  fluctuation of linear statistics of eigenvalues is one of the main  
area of research in random matrix theory.  


First, it was studied by Arharov \cite{arharov} in 1971 for sample covariance matrices. 
From then  the fluctuations of
eigenvalues for various  random matrices have been extensively studied by various people.  
For  results on fluctuations of linear eigenvalue statistics of  Wigner and sample covariance matrices,  see  \cite{johansson1998}, \cite{soshnikov1998tracecentral}, \cite{bai2004clt}, \cite{lytova2009central}, \cite{shcherbina2011central}.  For   band and sparse  random matrices, see  \cite{anderson2006clt},  \cite{jana2014}, \cite{li2013central},  \cite{shcherbina2015} and for Toeplitz and band Toeplitz matrices, see  \cite{chatterjee2009fluctuations} and \cite{liu2012}.

In a recent article \cite{adhikari_saha2017},  the CLT for linear eigenvalue statistics has been established in total variation norm for   reverse circulant matrices   with Gaussian entries and even degree monomial test functions. In a subsequent article \cite{adhikari_saha2018}, the authors extended their results for independent entries which are smooth functions of Gaussian variables.  Here we consider the fluctuation problem for  reverse circulant matrices with general entries which are independent and  satisfy some moment condition. However we could deal only with the test functions which are even degree monomials  or polynomials with even degree terms. 

A sequence is said to be an {\it input sequence} if the matrices are constructed from the given sequence. We consider the input sequence of the form $\{x_i: i\geq 1\}$
and the reverse circulant matrix is defined as 
$$
RC_n=\left(\begin{array}{cccccc}
x_1 & x_2 & x_3 & \cdots & x_{n-1} & x_n \\
x_2 & x_3 & x_4 & \cdots & x_{n} & x_{1}\\
x_3 & x_4 & x_5 & \cdots & x_{1} & x_{2} \\
\vdots & \vdots & {\vdots} & \ddots & {\vdots} & \vdots \\
x_n & x_1 & x_2 & \cdots & x_{n-2} & x_{n-1}
\end{array}\right).
$$
For $j=1,2,\ldots, n-1$, its $(j+1)$-th row is obtained by giving its $j$-th row a left circular shift by one positions. Note that the matrix is symmetric and the (i,\;j)-th element of the matrix is $x_{(i+j-1) \bmod  n}$.

Now we consider linear eigenvalue statistics as defined in \eqref{eqn:1} for $RC_n$ with test function $\phi(x)=x^{2p}$, $p\geq 1$. Therefore
$$ \sum_{k=1}^{n} \phi (\lambda_k)= \sum_{k=1}^{n}(\lambda_k)^{2p}= \Tr(RC_n)^{2p},$$
where $\lambda_1,\lambda_2,\ldots,\lambda_n$ are the eigenvalues of $RC_n$. We scale and centre $\Tr(RC_n)^{2p}$ to study its fluctuation, and define 
\begin{equation}\label{eqn:RCw_p}
w_p := \frac{1}{\sqrt{n}} \bigl\{ \Tr(RC_n)^{2p} - \E[\Tr(RC_n)^{2p}]\bigr\}. 
\end{equation}
Moreover, for a given polynomial  $$Q(x)=\sum_{k=1}^da_kx^{2k}$$ 
with degree $2d$, $d\geq 1$ and with even degree terms, we define 
\begin{equation}\label{eqn:RCw_Q}
w_Q := \frac{1}{\sqrt{n}} \bigl\{ \Tr(Q(RC_n)) - \E[\Tr(Q(RC_n))]\bigr\}. 
\end{equation}
 
Note that $w_Q$ and $w_p$ depends on $n$. But we suppress $n$  to keep the notation simple. In our first result, we calculate the covariance between $w_p$ and $w_q$ as $n \tends \infty$. 
\begin{theorem}\label{thm:revcircovar}
 Suppose  $RC_n$ is the reverse circulant matrix with independent input sequence $\{\frac{x_i}{\sqrt n}\}_{i\geq 1}$ such that
 \begin{equation}\label{eqn:condition}
\E(x_i)=0, \E(x_i^2)=1,  \E(x_i^4)=\E(x_1^4) \ \forall \ i \geq 1 \mbox{ and}\  \sup_{i\geq 1}\E(|x_i|^k)=\alpha_k<\infty \ \mbox{for}\ k\geq 3.
\end{equation}  
 Then for $p,q \geq1$,
\begin{equation} \label{eqn:sigma_p,q}
\sigma_{p,q}:=\lim_{n\to \infty} \Cov\big(w_p,w_q\big) = \sum_{k=2}^{\min\{p,q\}}c_k g(k) + (\E x_1^4-1)c_1,
\end{equation}
where 
\begin{align*}
c_k &=\binom{p}{p-k}^2(p-k)! \binom{q}{q-k}^2(q-k)!,\\
g(k)&=\frac{1}{(2k-1)!}\sum_{s=-(k-1)}^{k-1}\sum_{j=0}^{k+s-1}(-1)^j\binom{2k}{j}(k+s-j)^{2k-1}(2-{\bf 1}_{\{s=0\}})k!k!.
\end{align*}
Moreover, if $p=q$ then we denote $\sigma_{p,q}$ by $\sigma^{2}_{p}$.
\end{theorem}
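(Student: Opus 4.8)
The plan is to compute $\Cov(w_p,w_q)$ through the trace formula for reverse circulant matrices. The key identity is that the eigenvalues of $RC_n$ are explicit in terms of the discrete Fourier transform of the input sequence, so one has the combinatorial trace expansion
\[
\Tr(RC_n)^{2p} = \frac{1}{n^{p}} \sum_{\substack{i_1,\dots,i_{2p} \\ i_1 - i_2 + i_3 - \cdots - i_{2p} \equiv 0 \;(\mathrm{mod}\ n)}} x_{i_1} x_{i_2} \cdots x_{i_{2p}},
\]
after absorbing the $\tfrac{1}{\sqrt n}$ scaling of the entries. (Here one uses the symmetry $RC_n = RC_n^{T}$ and the specific index structure $x_{(i+j-1)\bmod n}$, which makes the alternating-sum constraint appear.) First I would write $w_p$ as the centred version of this sum, grouping indices into \emph{pair-partitions} and higher-order clusters; the negligible terms are those where some index appears exactly once (they vanish in expectation) or where the cluster structure uses too few free indices to survive the $1/\sqrt n$ normalization. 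The surviving contributions to $\Cov(w_p,w_q) = \tfrac1n\{\E[\Tr(RC_n)^{2p}\Tr(RC_n)^{2q}] - \E[\Tr(RC_n)^{2p}]\E[\Tr(RC_n)^{2q}]\}$ come from joint pairings of the $2p$ indices of the first trace with the $2q$ indices of the second, such that the two index cycles become \emph{connected} (otherwise the contribution cancels against the product of expectations).

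Next I would carry out the counting. The connected pairings that contribute at order $n$ are governed by how many index-slots in the first monomial get paired with slots in the second: if exactly $k$ such cross-pairings are made (for $2 \le k \le \min\{p,q\}$), the number of ways to choose and order them produces the factor $c_k = \binom{p}{p-k}^2 (p-k)!\,\binom{q}{q-k}^2 (q-k)!$ — here $\binom{p}{p-k}^2(p-k)!$ counts the ways the remaining $2(p-k)$ slots in the first monomial self-pair within a single cycle, and similarly for $q$. Once the combinatorial skeleton is fixed, the alternating modular constraints $i_1 - i_2 + \cdots \equiv 0$ on each of the two cycles, together with the pairing identifications, cut down the number of free summation indices; evaluating the resulting count of integer solutions modulo $n$, normalized by the powers of $n$, yields the arithmetic factor $g(k)$. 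The explicit formula for $g(k)$ — the inner double sum with the $(-1)^j\binom{2k}{j}(k+s-j)^{2k-1}$ terms — is exactly the number of such constrained solutions, which can be recognized as (a multiple of) the number of lattice points, computable via a finite-difference / inclusion–exclusion evaluation of a piecewise-polynomial (Eulerian-type) convolution; the factor $k!k!$ records the orderings within the $k$ cross-paired slots on each side. The $k=1$ term is special: a single cross-pairing forces all four relevant indices to coincide, bringing in the fourth moment $\E x_1^4$ rather than a Gaussian-type product, which is the source of the separate $(\E x_1^4 - 1)c_1$ term (the $-1$ being the subtraction of the pair-partition contribution already counted elsewhere, i.e. what a Gaussian would give).

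The main obstacle I expect is the exact evaluation of the arithmetic factor $g(k)$: one must carefully track which modular equations survive after the pairing identifications, handle the distinction between pairings that link the two alternating cycles with a sign that matches versus one that forces a shift $s$, and then evaluate the count of solutions of a system like $\sum \pm(\text{free indices}) \equiv s \pmod n$ uniformly in $n$. This is where the parameter $s$ ranging over $-(k-1),\dots,k-1$ and the indicator $\mathbf 1_{\{s=0\}}$ enter, and where a generating-function or Fourier argument (writing the count as $\tfrac1n\sum_{\ell} (\cdots)$ over roots of unity and passing to the limit) is cleanest. A secondary technical point is justifying that all the error terms — monomials with an unpaired index, clusters of size $\ge 3$, and disconnected-but-not-cancelling configurations — are genuinely $o(n)$ uniformly under only the moment bounds $\sup_i \E|x_i|^k = \alpha_k < \infty$; this needs the assumption $\E(x_i^4) = \E(x_1^4)$ for all $i$ (so the fourth-moment term has a clean limit) and careful but routine bookkeeping of index degrees of freedom. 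Once these two pieces are in place, collecting the $c_k g(k)$ contributions over $k = 2,\dots,\min\{p,q\}$ and adding the fourth-moment $k=1$ term gives \eqref{eqn:sigma_p,q}.
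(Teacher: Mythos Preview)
Your proposal is essentially the paper's own approach: expand both traces via the alternating-constraint formula, observe that only configurations with at least one cross-match survive the centring, and then count the surviving configurations to extract $c_k g(k)$ and the fourth-moment correction.

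One point deserves sharpening, however. You speak of generic ``pairings'' and ``self-pairings within a cycle'', but the alternating-sign constraint $\sum_{r}(-1)^r i_r \equiv 0 \pmod n$ makes the \emph{parity of positions} decisive: a self-matched pair $i_a=i_b$ frees a summation variable (and hence contributes at the right order) only when one of $a,b$ is odd and the other even --- the paper calls this \emph{odd--even pair matching}. This is precisely why $\binom{p}{p-k}^2(p-k)!$ is the right count (choose $p-k$ odd positions and $p-k$ even positions separately, then match them in $(p-k)!$ ways), and it is also what drives the odd/even dichotomy for the number of cross-matched indices: with $2k-1$ cross-matched entries the remaining self-matchings cannot all be odd--even in both vectors simultaneously, so those terms are $o(1)$, whereas $2k$ cross-matched entries (themselves lying in $A'_{2k}$) give the $c_k g(k)$ contribution. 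Without making this parity structure explicit, the step ``count the surviving free indices'' remains heuristic.

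A minor correction on the $k=1$ term: the $-1$ in $(\E x_1^4 - 1)$ is not the subtraction of a Gaussian pair-partition already counted elsewhere. For $k=1$ the residual constraint on the two cross-matched entries is $-\ell_1 + \ell_2 \equiv 0$, forcing $\ell_1=\ell_2$; thus both $\E[x_{\ell_1}^4\cdot(\text{rest})]$ and the product $\E[x_{\ell_1}^2\cdot(\text{rest})]\,\E[x_{\ell_1}^2\cdot(\text{rest})]$ are nonzero, and their difference produces $\E x_1^4 - (\E x_1^2)^2 = \E x_1^4 - 1$ directly from the covariance formula.
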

In our second result, we see the fluctuation of linear  eigenvalue statistics of  reverse circulant matrices with polynomial test functions with even degree terms. 

\begin{theorem}\label{thm:revcirpoly} 
Suppose  input entry of $RC_n$ is independent sequence $\{\frac{x_i}{\sqrt n}\}_{i\geq 1}$ which satisfy (\ref{eqn:condition}).
Then, as $n\to \infty$,
\begin{align*}
w_Q \stackrel{d}{\longrightarrow} N(0,\sigma_{Q}^2),
\end{align*}
where $\displaystyle \sigma_{Q}^2= \sum_{\ell=1}^d \sum_{k=1}^d a_{\ell} a_k \sigma_{\ell,k}$ and $\sigma_{\ell,k}$ is as in (\ref{eqn:sigma_p,q}). In particular, for $Q(x)= x^{2p}$,
\begin{align*}
 w_p \stackrel{d}{\longrightarrow} N(0,\sigma_{p}^2),
\end{align*}
where $\sigma^2_{p}= \sigma_{p,p}$.
\end{theorem}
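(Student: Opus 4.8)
The plan is to prove the convergence of $w_Q$ by the method of moments, building on the covariance computation of Theorem \ref{thm:revcircovar}. As $Q$ has fixed degree, $w_Q=\sum_{k=1}^d a_k w_k$, so for every $N\ge1$, $\E[w_Q^{N}]=\sum_{k_1,\dots,k_N=1}^d a_{k_1}\cdots a_{k_N}\,\E[w_{k_1}\cdots w_{k_N}]$. It therefore suffices to prove that for all $N\ge1$ and all $k_1,\dots,k_N\in\{1,\dots,d\}$,
\[
\lim_{n\to\infty}\E\Big[\prod_{r=1}^N w_{k_r}\Big]=\sum_{\mathcal P}\ \prod_{\{r,r'\}\in\mathcal P}\sigma_{k_r,k_{r'}},
\]
where $\mathcal P$ ranges over the pair-partitions of $\{1,\dots,N\}$, so that the right-hand side vanishes for odd $N$. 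Granting this and using $\sigma_Q^2=\sum_{\ell,k=1}^d a_\ell a_k\sigma_{\ell,k}$, we obtain $\E[w_Q^{2m}]\to(2m-1)!!\,(\sigma_Q^2)^m$ and $\E[w_Q^{2m+1}]\to0$ for every $m\ge1$; since a centered Gaussian law is determined by its moments, this yields $w_Q\convd N(0,\sigma_Q^2)$, and the assertion for $w_p$ is the special case $Q(x)=x^{2p}$.

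The computational engine is the exact trace identity for even powers of $RC_n$: writing $\omega$ for a primitive $n$th root of unity and $d_{t,n}=\frac{1}{\sqrt n}\sum_{\ell=0}^{n-1}x_{\ell+1}\,\omega^{t\ell}$, one has $\Tr(RC_n)^{2p}=\sum_{t=0}^{n-1}|d_{t,n}|^{2p}$, which upon expansion and summation of the geometric sum in $t$ becomes
\[
\Tr(RC_n)^{2p}=\frac{1}{n^{p-1}}\sum_{\substack{\mathbf u,\mathbf v\in\{0,\dots,n-1\}^p\\ u_1+\cdots+u_p\,\equiv\,v_1+\cdots+v_p\,(\mathrm{mod}\,n)}}\ \prod_{\ell=1}^p x_{u_\ell+1}\,x_{v_\ell+1}.
\]
Substituting into \eqref{eqn:RCw_p} and multiplying out, $\E[\prod_{r=1}^N w_{k_r}]$ equals $n^{-N/2}$ times a sum, over $N$ \emph{circuits} (one admissible index pair $(\mathbf u^{(r)},\mathbf v^{(r)})$ for the $r$th factor), of the joint expectations $\E\big[\prod_{r=1}^N(\xi_r-\E\xi_r)\big]$ of the associated centered monomials $\xi_r$ in the entries $x_i$. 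Finiteness of these expectations and of all the estimates that follow uses the uniform moment bounds $\sup_i\E|x_i|^k=\alpha_k<\infty$ from \eqref{eqn:condition}.

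Call two circuits \emph{linked} if their monomials share a common variable $x_i$, and split the $N$ circuits into \emph{clusters}, the connected components of this relation. Independence and the mean-zero property of the $x_i$ make the joint expectation factor over clusters, and a singleton cluster contributes $0$ after centering; so only cluster decompositions with all parts of size at least $2$ matter. A counting argument --- bounding the number of free indices and the monomial expectations inside a cluster via the $\alpha_k$, and exploiting the modular constraints $\sum_\ell u^{(r)}_\ell\equiv\sum_\ell v^{(r)}_\ell$ --- shows that any connected cluster of circuits contributes $O(n)$; hence a decomposition into $j$ clusters contributes $O(n^j)$, and since $j\le N/2$ with equality exactly when every cluster has size $2$, the factor $n^{-N/2}$ annihilates in the limit every decomposition except those into $N/2$ two-circuit clusters --- in particular $\E[\prod_r w_{k_r}]\to0$ for odd $N$. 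For even $N$ the surviving terms reassemble as $\sum_{\mathcal P}\prod_{\{r,r'\}\in\mathcal P}(\text{pairwise limit})$, and the pairwise limit attached to a two-circuit cluster formed from $\Tr(RC_n)^{2k_r}$ and $\Tr(RC_n)^{2k_{r'}}$ is, by definition, $\lim_n\Cov(w_{k_r},w_{k_{r'}})=\sigma_{k_r,k_{r'}}$ as evaluated in Theorem \ref{thm:revcircovar}; its proof identifies these contributing two-circuit configurations as the pair-matchings (each surviving variable carried to total power $2$, giving the Wick-type terms) together with a degenerate family in which one variable is carried to total power $4$ (producing the $\E x_1^4$ term, the hypothesis $\E x_i^4=\E x_1^4$ making it independent of $i$). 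This establishes the displayed moment formula, and the theorem follows.

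The main obstacle --- and essentially the only part that is not routine once Theorem \ref{thm:revcircovar} is available --- is the cluster estimate: one must show that clusters of three or more linked circuits, and all two-circuit configurations outside the pair-matched and exceptional power-$4$ families, are negligible after the $n^{-N/2}$ scaling, \emph{uniformly} in $n$ and across the non-identically-distributed entries, using only the moment conditions \eqref{eqn:condition}. This is exactly the step that upgrades the leading-order Wick behaviour --- immediate for Gaussian entries --- to the general setting; the reduction to joint moments and the reassembly of the Gaussian moment formula around the covariance of Theorem \ref{thm:revcircovar} are straightforward by comparison.
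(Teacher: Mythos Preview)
Your proposal follows essentially the same route as the paper: method of moments, a cluster decomposition of the joint index set, the observation that only pair-clusters survive the $n^{-N/2}$ scaling, and identification of each pair contribution with the covariance of Theorem~\ref{thm:revcircovar}. Your DFT-based derivation of the trace formula is a cosmetic variant --- after expanding $\sum_t|d_{t,n}|^{2p}$ you land on exactly the paper's index set $A_{2p}$ (with $u_\ell,v_\ell$ playing the role of the odd/even positions $i_{2\ell-1},i_{2\ell}$), so from that point on the combinatorics coincide.

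The one substantive point is the cluster estimate, which you correctly flag as the main obstacle but do not prove. The paper supplies this as a separate lemma (Lemma~\ref{lem:cluster}): for a cluster of $\ell\ge 3$ vectors $J_i\in A_{2p_i}$ in which every entry has multiplicity at least two, the number of admissible configurations is $o(n^{p_1+\cdots+p_\ell-\ell/2})$, proved by induction on $\ell$ using the odd--even pair-matching structure of $A_{2p}$. Note that your phrasing ``any connected cluster contributes $O(n)$'' is actually a sharper claim than the paper's stated $o(n^{\ell/2})$; the paper's proof does in fact yield the stronger $O(n^{\sum p_i-(\ell-1)})$ bound (visible in the $\ell=3$ base case and the inductive step), so your formulation is defensible, but you should be aware that the argument really requires the full inductive analysis --- it is not a one-line count.
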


\begin{remark}
	In the above theorems we have considered the fluctuation of $w_p$ for  $p\geq1$. For $p=0$, 
	\begin{align*}
	w_0 = \frac{1}{\sqrt{n}} \bigl\{ \Tr(I) - \E[\Tr(I)]\bigr\} = \frac{1}{\sqrt{n}}[n-n] = 0
	\end{align*}
	and hence it has no fluctuation.
	So we ignore this case, that is, for $p=0$.
\end{remark}

In Section \ref{sec:cov} we prove Theorem \ref{thm:revcircovar}. We derived trace formula and state some results which will be used to prove Theorem \ref{thm:revcircovar}. In Section \ref{sec:poly} we prove Theorem \ref{thm:revcirpoly}. We use  method of moments and  Wick's formula to prove Theorem  \ref{thm:revcirpoly}.

\section{Proof of Theorem \ref{thm:revcircovar}}\label{sec:cov}

We first define some notation which will be used   in the proof of Theorem \ref{thm:revcirpoly}.
\begin{align}\label{def:A_2p}
A_{2p}&=\{(i_1,\ldots,i_{2p})\in \mathbb N^{2p}\suchthat \sum_{k=1}^{2p}(-1)^ki_k=0 \mbox{ (mod $n$) }, 1\le i_1,\ldots,i_{2p}\le n\},\\
A_{2p}'&=\{(i_1,\ldots,i_{2p})\in \mathbb N^{2p}\suchthat \sum_{k=1}^{2p}(-1)^ki_k=0 \mbox{ (mod $n$) }, 1\le i_1\neq i_2\neq \cdots\neq i_{2p}\le n\}, \nonumber\\
A_{2p,s}&=\{(i_1,\ldots,i_{2p})\in \mathbb N^{2p}\suchthat \sum_{k=1}^{2p}(-1)^ki_k=sn,1\le i_1,\ldots,i_{2p}\le n\}, \nonumber\\
A_{2p,s}'&=\{(i_1,\ldots,i_{2p})\in \mathbb N^{2p}\suchthat \sum_{k=1}^{2p}(-1)^ki_k=sn,1\le i_1\neq i_2\neq \cdots\neq i_{2p}\le n\}. \nonumber
\end{align}
Now recall the trace formula for reverse circulant matrices from \cite{adhikari_saha2017}.
Let $e_1,\ldots,e_n$ be the standard unit vectors in $\mathbb R^n$, i.e., $e_i=(0,\ldots,1,\ldots, 0)^t$ ($1$ in $i$-th place).  Therefore we have 
	\begin{align*}
	(RC_n)e_i=\mbox{$i$-th column}=\sum_{i_1=1}^nX_{i_1}e_{i_1-i+1 \mbox{ mod $n$}},
	\end{align*}
	for $i=1,\ldots, n$ (we write $e_0=e_n$). Repeating the procedure we get 
	\begin{align*}
	(RC_n)^2e_i=\sum_{i_1,i_2=1}^nX_{i_1}X_{i_2}e_{i_2-i_1+i \mbox{ mod $n$}},
	\end{align*}
	for $i=1,\ldots, n$. Therefore in general we get
	\begin{align*}
	(RC_n)^{2p}e_i&=\sum_{i_1,\ldots,i_{2p}=1}^nX_{i_1}\ldots X_{i_{2p}}e_{i_{2p}-i_{2p-1}\cdots -i_1+i \mbox{ mod $n$}},
	\\(RC_n)^{2p+1}e_i&=\sum_{i_1,\ldots,i_{2p+1}=1}^nX_{i_1}\ldots X_{i_{2p+1}}e_{i_{2p+1}-i_{2p}\cdots +i_1-i+1 \mbox{ mod $n$}},
	\end{align*}
	for $i=1,\ldots, n$.  Therefore the trace of $(RC_n)^{2p}$ can be written as 
	\begin{align}\label{trace formula RC_n}
	\Tr[(RC_n)^{2p}]=\sum_{i=1}^ne_i^t(RC_n)^{2p}e_i=n\sum_{A_{2p}}X_{i_1}\ldots X_{i_{2p}},
	\end{align}
where $A_{2p}$ as defined in \eqref{def:A_2p}.

The following result will be used in the proof of  Theorem \ref{thm:revcircovar}. 
\begin{result} \label{result:cardinalityA}
	Suppose $|A_{2p,s}|$ denotes   the cardinality of $A_{2p,s}$. Then 
	$$
	|A_{2p,s}|=\sum_{k=0}^{p+s-1}(-1)^k\binom{2p}{k}\binom{(p+s-k)n+p-1}{2p-1}, \;\;\mbox{ for $s=-(p-1),\ldots, 0,\ldots, p-1$.}
	$$ 
	
\end{result}
For the proof of Result \ref{result:cardinalityA}, we refer to \cite[Lemma 14]{adhikari_saha2017}. 
Now for a given vector $(i_1, i_2, \ldots, i_p)$, we define {\it odd-even pair matched} elements of the vector. 
\begin{definition}\label{def:odd-even}
	Suppose $(i_1, i_2, \ldots, i_p)$ is a vector in $\mathbb N^p$. Two elements $i_k, i_\ell$ of $(i_1, i_2, \ldots, i_p)$  are said to be {\it odd-even pair matched} if $i_k=i_\ell$ and one of the elements  appears at an odd position in $(i_1, i_2, \ldots, i_p)$ and other one appears at an even position in $(i_1, i_2, \ldots, i_p)$. For example, in $(1,1,3,4)$, entry $1$ is {\it odd-even pair matched} whereas in $(1,2,1,3)$, entry $1$ is  not {\it odd-even pair matched}.
	
	We shall call a vector  $(i_1, i_2, \ldots, i_p)$ is {\it odd-even pair matched} if all the entries of $(i_1, i_2, \ldots, i_p)$ are {\it odd-even pair matched}. In other words, each entry of $(i_1, i_2, \ldots, i_p)$ appears same number of times in odd and even positions. 
\end{definition}
Observe that, if $(i_1,i_2,\ldots,i_{2p})\in A_{2p}$  and each entry of $\{i_1,i_2,\ldots,i_{2p} \}$ has multiplicity greater than or equal to two, then the maximum number of free variables in $(i_1,i_2,\ldots,i_{2p})$ will be $p$, only when $(i_1,i_2,\ldots,i_{2p})$ is {\it odd-even pair matched}. By free variables we mean that these variables can be chosen freely from their range $\{1,2,\ldots,n\}$. We shall use this observation in the proof of Theorem \ref{thm:revcircovar}. 

Now we  prove Theorem \ref{thm:revcircovar}. 
\begin{proof}[Proof of Theorem \ref{thm:revcircovar}]
	Since $\E(w_p)=\E(w_q)=0$ therefore we get
	\begin{align*}
	\Cov\big(w_p,w_q\big)&=\E[w_p w_q]\\
	&= \frac{1}{n} \Big\{ \E[\Tr(RC_n)^{2p}\Tr(RC_n)^{2q} ]- \E[\Tr(RC_n)^{2p}]\E[\Tr(RC_n)^{2q}]   \Big\}. 
	\end{align*}
	Using the trace formula (\ref{trace formula RC_n}), we get
	\begin{align*}
	\E[\Tr(RC_n)^{2p}]&= \E\Big[n \sum_{A_{2p}} \frac{x_{i_1}}{ \sqrt{n}} \frac{x_{i_2}}{ \sqrt{n}}\cdots \frac{x_{i_{2p}}}{ \sqrt{n}} \Big]
	= \frac{1}{ n^ {p-1}} \E\Big[ \sum_{A_{2p}} x_{i_1} x_{i_2} \cdots x_{i_{2p}}\Big].	
	\end{align*}
	Therefore
	\begin{align} \label{eqn:T_1+T_2_RC}
	\Cov\big(w_p,w_q\big)&=\E[w_p w_q] \nonumber\\
	 &=\frac{1}{n^{p+q-1}} \sum_{A_{2p}, A_{2q}} \Big\{\E[x_{i_1}x_{i_2}\cdots x_{i_{2p}} x_{j_1} x_{j_2}\cdots x_{j_{2q}} ]
	 -\E[ x_{i_1} x_{i_2}\cdots x_{i_{2p}} ] \E[x_{j_1} x_{j_2} \cdots x_{j_{2q}}  ]        \Big\}.
	\end{align}
	First observe that, if $\{i_1,i_2,\ldots,i_{2p}\}\cap \{j_1,j_2,\ldots,j_{2p}\}=\emptyset$ then from independence of $x_i$'s
	$$\E[x_{i_1}x_{i_2}\cdots x_{i_{2p}} x_{j_1} x_{j_2}\cdots x_{j_{2q}} ]
	 -\E[ x_{i_1} x_{i_2}\cdots x_{i_{2p}} ] \E[x_{j_1} x_{j_2} \cdots x_{j_{2q}}  ]   =0.$$
	Therefore we can get the non-zero contribution in (\ref{eqn:T_1+T_2_RC}) only when there is at least one cross matching among $\{i_1, i_2,\ldots,i_{2p}\}$ and $\{j_1, j_2, \ldots,j_{2q}\}$, i.e., $\{i_1,i_2,\ldots,i_{2p}\}\cap \{j_1,j_2,\ldots,j_{2q}\}\neq\emptyset$. 
	
	Now using the above observation, (\ref{eqn:T_1+T_2_RC}) can be written as
	\begin{align}\label{eqn:I_k_RC}
	\lim_{n\to\infty} \Cov\big(w_p,w_q\big) 
	=& \lim_{n\to\infty}\frac{1}{n^{p+q-1}}\sum_{m=1}^{\min\{2p,2q\}}\sum_{I_{m}} \big( \E[x_{i_1}x_{i_2}\cdots x_{i_{2p}} x_{j_1} x_{j_2}\cdots x_{j_{2q}} ] \\
	&\qquad -\E[ x_{i_1} x_{i_2}\cdots x_{i_{2p}} ] \E[x_{j_1} x_{j_2}\cdots x_{j_{2q}}  ]       \big)\nonumber ,
	\end{align}
	where for each $m=1,2, \ldots, \min\{2p,2q\}$, $I_m$ is defined as
	\begin{equation}\label{def:I_k}
	I_{m}:=\{((i_1,\ldots,i_{2p}),(j_1,\ldots,j_{2q}))\in A_{2p}\times A_{2q}\suchthat |\{i_1,\ldots,i_{2p}\}\cap \{j_1,\ldots,j_{2q}\}|=m\}
	\end{equation}
and $|\{\cdot\}|$ denotes cardinality of the set  $\{\cdot\}$ (with multiplicity).
	
	Depending on the number of cross matching elements, there are two following cases:
	\vskip3pt
	\noindent \textbf{Case I.} \textbf{Odd number of cross matching:} 
	Suppose  $(i_1,i_2,\ldots,i_{2p})\in A_{2p}$ and $(j_1,j_2,\ldots,j_{2q})\in A_{2q}$, and $m=2k-1$, that is, $|\{i_1,i_2,\ldots,i_{2p}\}\cap \{j_1,j_2,\ldots,j_{2q}\}|=2k-1$ for $k=1,2,\ldots, \min\{p,q\}$.  
	 We shall show  that such $(i_1,i_2,\ldots,i_{2p})\in A_{2p}$ and $(j_1,j_2,\ldots,j_{2q})\in A_{2q}$ will have zero contribution in (\ref{eqn:I_k_RC}).
	
	First suppose $k=1$, that is, $|\{i_1,i_2,\ldots,i_{2p}\}\cap \{j_1,j_2,\ldots,j_{2q}\}|=1$. For $k=1$, the typical term of (\ref{eqn:I_k_RC}) will look like 
	$$ \sum_{I_{1}} \big( \E[x_{\ell_1}x_{i_2}\cdots x_{i_{2p}} x_{\ell_1} x_{j_2}\cdots x_{j_{2q}} ] -\E[ x_{\ell_1} x_{i_2}\cdots x_{i_{2p}} ] \E[x_{\ell_1} x_{j_2}\cdots x_{j_{2q}}  ]  \big),$$
	where $I_1 \subseteq A_{2p}\times A_{2q}$ and it looks like as
	$((\ell_1, i_2,\ldots,i_{2p}),(\ell_1,j_2,\ldots,j_{2q})).$
	
	 Since from (\ref{eqn:condition}), we have moment conditions on $x_i$, that is,
	\begin{equation*}
	\E[x_i]= 0, \ \E(x^2_i)=1 \mbox{ and } \sup_{i \geq 1}\E(|x_i|^{k})= \alpha_k < \infty \mbox{ for } k \geq 3.
	\end{equation*}  
	Therefore there exist $\gamma_1>0$, which depends only on $p, q$, such that 
	\begin{align} \label{eqn:alpha_1}
	\sum_{I_{1}} \big( \E[x_{\ell_1}x_{i_2}\cdots x_{i_{2p}} x_{\ell_1} x_{j_2}\cdots x_{j_{2q}} ] -\E[ x_{\ell_1} x_{i_2}\cdots x_{i_{2p}} ] \E[x_{\ell_1} x_{j_2}\cdots x_{j_{2q}}  ]  \big)
	& \leq \gamma_1 |I'_1|,
	\end{align}
	where  
	\begin{align*}
	I'_1 = \{((\ell_1, i_2,\ldots,i_{2p}),& (\ell_1,j_2,\ldots,j_{2q}))\in I_1 : \mbox{ each elements in } \{ \ell_1, i_2,\ldots,i_{2p}\} \cup \\ 
	& \{{\ell_1},{j_2}, \ldots, {j_{2q}} \}  
	 \mbox{ has multiplicity greater than or equal to two} \}.
	\end{align*}
 	Now we calculate the cardinality of $I'_1$. First recall the observation from Definition \ref{def:odd-even}, the maximum number of free variable in $(\ell_1, i_2,\ldots,i_{2p})$ will be $p$, when $(\ell_1, i_2,\ldots,i_{2p})$ is {\it odd-even pair matched}. Once we have chosen $(\ell_1, i_2,\ldots,i_{2p})$, the  entry $\ell_1$ of $({\ell_1},{j_2}, \ldots, {j_{2q}})$ is also chosen, therefore the maximum possible number of free variables from $({\ell_1},{j_2}, \ldots, {j_{2q}})$ will be $q-2$, where one extra $(-1)$ is coming due to the  constraint, $- \ell_1+\sum_{s=2}^{2q} (-1)^sj_s=0 \mbox{ (mod $n$) }$ and hence $|I'_1|= O(n^{p+q-2})$. Therefore from (\ref{eqn:alpha_1}),
	
	$$\frac{1}{n^{p+q-1}}\sum_{I_{1}} \big( \E[x_{i_1}\cdots x_{i_{2p}} x_{j_1} \cdots x_{j_{2q}} ] -\E[ x_{i_1}\cdots x_{i_{2p}} ] \E[x_{j_1}\cdots x_{j_{2q}}  ] \big) = o(1). $$


	Now for $k\geq 2$, 
 $I_{2k-1}$ looks like as
	$((\ell_1,\ldots,\ell_{2k-1},i_{2k}, \ldots,i_{2p}), (\ell_1,\ldots,\ell_{2k-1},j_{2k}, \ldots,j_{2q})).$
	
	As we have done for $k=1$, from (\ref{eqn:condition}), 
	there exist $\gamma_{2k-1}>0$, which depends only on $p, q$, such that the typical term of  (\ref{eqn:I_k_RC}) will be
	\begin{align} \label{eqn:alpha_2k-1}
	&\Big|\sum_{I_{2k-1}} \big( \E[x_{\ell_1}  \cdots x_{\ell_{2k-1}}  x_{i_{2k}} \cdots x_{i_{2p}} x_{\ell_1}\cdots x_{\ell_{2k-1}} x_{j_{2k}}\cdots x_{j_{2q}} ] \nonumber\\
	 & \qquad -\E[ x_{\ell_1}\cdots x_{\ell_{2k-1}}x_{i_{2k}} \cdots x_{i_{2p}} ] \E[x_{\ell_1}\cdots x_{\ell_{2k-1}} x_{j_{2k}}\cdots x_{j_{2q}} ]  \big)\Big| \nonumber \\
	& \leq \gamma_{2k-1} |I'_{2k-1}|,
	\end{align}
	where  
	\begin{align*}
	I'_{2k-1} = \{ ((\ell_1,\ldots, & \ell_{2k-1},i_{2k}, \ldots,i_{2p}), (\ell_1,\ldots,\ell_{2k-1},j_{2k}, \ldots,j_{2q}))\in I_{2k-1} : \mbox{ each  } \\
	& \mbox{ elements in }\{ \ell_1,\ldots,\ell_{2k-1},i_{2k}, \ldots,i_{2p}\} \cup \{\ell_1,\ldots,\ell_{2k-1},j_{2k}, \ldots,j_{2q} \} \\ 
	& \mbox{ has multiplicity greater than or equal to two} \}.
	\end{align*} 
	Now we calculate the cardinality of $I'_{2k-1}$. First observe that we shall get maximum number of free variables in $I'_{2k-1}$, if the following conditions hold:
	\begin{enumerate}
		\item [(i)] Each elements of $ \{\ell_1, \ell_2, \ldots, \ell_{2k-1}\}$ are distinct.
		\item [(ii)] 
		There exist $i^{*} \in (i_{2k},i_{2k+1},\ldots,i_{2p})$ such that $i^{*}$ equal to $\ell_s$ for $s=1,2, \ldots, 2k-1$. Without loss of generality, we suppose $i^{*}$ appears at even position and $\ell_s= \ell_{2k-1}$. That is, $ i^{*}= \ell_{2k-1}$. Similarly, there exist $j^{*} \in (j_{2k},j_{2k+1},\ldots,j_{2p})$ at odd position such that $j^{*}=\ell_{2k-2} $.
		\item [(iii)] $ \{\ell_1, \ell_2, \ldots, \ell_{2k-3}\} \cap \{i_{2k},\ldots,i_{2p}\} \setminus \{i^{*}\} \cap \{j_{2k},\ldots,j_{2p}\}\setminus \{j^{*}\} =\emptyset$.
		\item [(iv)] Each elements of $\{i_{2k},i_{2k+1},\ldots,i_{2p}\} \cup \{i^{*}\}$ and $\{j_{2k},j_{2k+1},\ldots,j_{2p}\} \cup \{j^{*}\}$ are {\it odd-even pair matched}.
	\end{enumerate}
	Note that, $(\ell_1,\ldots,\ell_{2k-1},i_{2k}, \ldots,i_{2p}) \in A_{2p} $ and $(\ell_1,\ldots,\ell_{2k-1},j_{2k}, \ldots,j_{2q}) \in A_{2q}$, that is, $$\sum_{r=1}^{2k-1}(-1)^r \ell_r + \sum_{r=2k}^{2p}(-1)^r i_r=0 \mbox{ (mod $n$) } \ \mbox{ and }\ 
	\sum_{r=1}^{2k-1}(-1)^r \ell_r + \sum_{r=2k}^{2q}(-1)^r j_r=0 \mbox{ (mod $n$)}.$$ But each elements of $\{i_{2k},i_{2k+1},\ldots,i_{2p}\} \cup \{\ell_{2k-1}\}$ and $\{j_{2k},j_{2k+1},\ldots,j_{2p}\} \cup \{\ell_{2k-2}\}$ are {\it odd-even pair matched}, therefore 
	the above two constraints will change into new constraints, namely, 
	\begin{equation} \label{eqn:2constraint L}
	\sum_{r=1}^{2k-2}(-1)^r \ell_r =0 \mbox{ (mod $n$)}\ \mbox{ and }\ \sum_{r=1}^{2k-3}(-1)^r \ell_r - \ell_{2k-1} =0 \mbox{ (mod $n$). }
	\end{equation}
	Hence the maximum number of free entries in $I'_{2k-1}$
	are $[(p-k+1)+(q-k+1) + (2k-3) + (-2)]= p+q-3$, where $(-2)$ is coming due to the constraints (\ref{eqn:2constraint L}). 
	
	Therefore from the above discussion on cardinality of $I'_{2k-1}$ and from (\ref{eqn:alpha_2k-1}),  we get
\begin{align*}
	&\frac{1}{n^{p+q-1}} \sum_{I_{2k-1}} \big( \E[x_{\ell_1}  \cdots x_{\ell_{2k-1}}  x_{i_{2k}} \cdots x_{i_{2p}} x_{\ell_1}\cdots x_{\ell_{2k-1}} x_{j_{2k}}\cdots x_{j_{2q}} ] \nonumber\\
	&\quad -\E[ x_{\ell_1}\cdots x_{\ell_{2k-1}}x_{i_{2k}} \cdots x_{i_{2p}} ] \E[x_{\ell_1}\cdots x_{\ell_{2k-1}} x_{j_{2k}}\cdots x_{j_{2q}} ]  \big) \nonumber \\
	& = o(1).
	\end{align*}
	Hence for each odd $m$, we get
	\begin{align} \label{eqn:I_odd}
	\lim_{n\to\infty}\frac{1} {n^{p+q-1}} & \sum_{I_{m}} \big( \E[x_{i_1}\cdots x_{i_{2p}} x_{j_1}\cdots x_{j_{2q}} ] -\E[ x_{i_1} \cdots x_{i_{2p}} ] \E[x_{j_1}\cdots x_{j_{2q}}  ]       \big) 
	 =0.
	\end{align}

\noindent \textbf{Case II.} \textbf{Even number of cross matching:} Suppose  $(i_1,i_2,\ldots,i_{2p})\in A_{2p}$ and $(j_1,j_2,\ldots,j_{2q})\in A_{2q}$, and $m=2k$, that is, $|\{i_1,i_2,\ldots,i_{2p}\}\cap \{j_1,j_2,\ldots,j_{2q}\}|=2k$ for some $k=1,2,\ldots, \min\{p,q\}$.  
  We shall show that such $(i_1,i_2,\ldots,i_{2p})\in A_{2p}$ and $(j_1,j_2,\ldots,j_{2q})\in A_{2q}$ will have non-zero contribution in (\ref{eqn:I_k_RC}). 

 In this case, a typical element of $I_{2k}$ looks like as $((\ell_1,\ldots,\ell_{2k},i_{2k+1}, \ldots,i_{2p}), (\ell_1,\ldots,\ell_{2k},j_{2k+1}, \ldots,j_{2q}))$.
Now we define $H \subseteq I_{2k}$ where $I_{2k}$ is as  in (\ref{def:I_k}).
 \begin{definition} \label{def:H}
 	$H$ is a subsets of all $((\ell_1,\ldots,\ell_{2k},i_{2k+1}, \ldots,i_{2p}), (\ell_1,\ldots,\ell_{2k},j_{2k+1}, \ldots,j_{2q}) ) \in I_{2k}$ such that 
 	\begin{enumerate}
 		\item [(i)] $ \{\ell_1, \ell_2, \ldots, \ell_{2k}\} \cap \{i_{2k+1},i_{2k+2},\ldots,i_{2p}\}\cap \{j_{2k+1},j_{2k+2},\ldots, j_{2q}\}=\emptyset$,
 		\item [(ii)] $(i_{2k},i_{2k+1},\ldots,i_{2p})$ and $(j_{2k},j_{2k+1},\ldots,j_{2p})$ are {\it odd-even pair matched}.
 	\end{enumerate}
 \end{definition}

By using the similar calculation, as we have done to calculate the number of free entries of $I_{2k-1}$, we shall get maximum number of free entries in $((\ell_1,\ldots,\ell_{2k},i_{2k+1}, \ldots,i_{2p}), (\ell_1,\ldots,\ell_{2k},j_{2k+1}, \ldots, j_{2q}) )$, when $((\ell_1,\ldots,\ell_{2k},i_{2k+1}, \ldots,i_{2p}), (\ell_1,\ldots,\ell_{2k},j_{2k+1}, \ldots,j_{2q}) ) \in H$. Now we calculate the cardinality of $H$.
 
 Suppose $((\ell_1,\ldots,\ell_{2k},i_{2k+1}, \ldots,i_{2p}), (\ell_1,\ldots,\ell_{2k},j_{2k+1}, \ldots,j_{2q}) ) \in H$. Since from (ii) condition of Definition \ref{def:H}, $(i_{2k+1},i_{2k+2},\ldots,i_{2p})$ and $(j_{2k+1},j_{2k+2},\ldots, j_{2q})$ are {\it odd-even pair matched}, therefore $\sum_{r=2k+1}^{2p}(-1)^r i_r=0 \mbox{ (mod $n$) }$ and $\sum_{r=2k+1}^{2q}(-1)^r j_r=0 \mbox{ (mod $n$)}$ are aromatically satisfy, and hence the constraints on  $(\ell_1,\ldots,\ell_{2k},i_{2k+1}, \ldots,i_{2p})$ and $(\ell_1,\ldots,\ell_{2k},j_{2k+1}, \ldots,j_{2q})$  will change into a single constraint, namely,
 \begin{equation} \label{eqn:constraint L}
 \sum_{r=1}^{2k}(-1)^r \ell_r =0 \mbox{ (mod $n$).}
 \end{equation}
 Now we first deal with $k\geq 2$, later we shall calculate for $k=1$.
	For $k=2,3, \ldots, \min\{p,q\}$, if we assume each elements of $ \{\ell_1, \ell_2, \ldots, \ell_{2k}\}$ are distinct, then cardinality of $H$ will be $O(n^{p+q-1})$, where $(-1)$ is coming due to the constraint $\sum_{r=1}^{2k}(-1)^r \ell_r =0 \mbox{ (mod $n$)}$. Note that in any other situation, like that any one of the conditions of Definition \ref{def:H} does not hold or elements of $ \{\ell_1, \ell_2, \ldots, \ell_{2k}\}$ are not distinct then cardinality of $H$ will be $o(n^{p+q-1})$.




%
 Since each elements of $ \{\ell_1, \ell_2, \ldots, \ell_{2k}\}$ are distinct, therefore 
 \begin{equation*} \label{eqn:H^c}
 \E[x_{\ell_1} \cdots x_{\ell_{2k}} x_{i_{2k+1}} \cdots x_{i_{2p}} ] \E[x_{\ell_1} \cdots x_{\ell_{2k}} x_{j_{2k+1}} \cdots x_{j_{2q}} ]=0.
 \end{equation*}
 Hence for each $k\geq 2$, we get
  \begin{align} \label{eqn:I_even}
  &\lim_{n\to\infty}\frac{1} {n^{p+q-1}}  \sum_{I_{2k}} \big( \E[x_{i_1}x_{i_2}\cdots x_{i_{2p}} x_{j_1} x_{j_2}\cdots x_{j_{2q}} ] -\E[ x_{i_1} x_{i_2}\cdots x_{i_{2p}} ] \E[x_{j_1} x_{j_2}\cdots x_{j_{2q}}  ]       \big) \nonumber \\
  & = \lim_{n\to\infty}\frac{1} {n^{p+q-1}}  \sum_{I_{2k}} \E[x_{i_1}x_{i_2}\cdots x_{i_{2p}} x_{j_1} x_{j_2}\cdots x_{j_{2q}} ]
  =O(1).
  \end{align}
 Now  combining both the Cases, (\ref{eqn:I_odd}) and (\ref{eqn:I_even}), from (\ref{eqn:I_k_RC}) we get
   \begin{align} \label{eqn:I_2k2_RC}
  \lim_{n\to\infty} \Cov\big(w_p,w_q\big) 
  =& \lim_{n\to\infty}\frac{1}{n^{p+q-1}}\sum_{k=1}^{\min\{p,q\}} \sum_{I_{2k}} \big( \E[x_{i_1}x_{i_2}\cdots x_{i_{2p}} x_{j_1} x_{j_2}\cdots x_{j_{2q}} ]    \big).
  \end{align} 
 Now for  fixed $k=2, 3, \ldots, \min\{p,q\}$, we calculate 
 the contribution of each term of (\ref{eqn:I_2k2_RC}).
 \begin{align} \label{eqn:I_2k_c_k_RC}
 & \lim_{n\to\infty}  \frac{1}{n^{p+q-1}}\sum_{I_{2k}} \E[x_{i_1}x_{i_2}\cdots x_{i_{2p}} x_{j_1} x_{j_2}\cdots x_{j_{2q}} ] \nonumber\\
 & = \lim_{n\to\infty}\frac{1}{n^{p+q-1}}\sum_{A'_{2k}} \sum_{A'_{2k}} c_k n^{(p-k) + (q-k)} \E[x_{i_1}x_{i_2}\cdots x_{i_{2k}} x_{j_1} x_{j_2}\cdots x_{j_{2k}} ] \nonumber\\
 & = c_k \lim_{n\to\infty}\frac{1}{n^{2k-1}}\sum_{A'_{2k}} \sum_{A'_{2k}} \E[x_{i_1}x_{i_2}\cdots x_{i_{2k}} x_{j_1} x_{j_2}\cdots x_{j_{2k}} ],
 \end{align}
 where $c_k=\binom{p}{p-k}^2(p-k)! \binom{q}{q-k}^2(q-k)!.$ For the odd-even pair matching among $(2p-2k)$ variables from $(i_1,\ldots, i_{2p})$, first we choose $(p-k)$ odd and $(p-k)$ even position from the available $p$ odd and $p$ even position in $\binom{p}{p-k}^2$ ways. After choosing $(p-k)$ odd, $(p-k)$ even positions and $(p-k)$ free variables in odd positions, the random variables in even positions can permute among themselves (satisfying the condition of odd-even pair matching) in  $(p-k)!$  ways. Hence, odd-even pair matching among $(2p-2k)$ variables of $(i_1,\ldots, i_{2p})$  happens in  $\binom{p}{p-k}^2(p-k)!n^{p-k}$ ways. Similarly, odd-even pair matching happens among $(2q-2k)$ variables from $(j_1,j_2,\ldots, j_{2q})$ in $\binom{q}{q-k}^2(q-k)!n^{q-k}$ ways. 
 The rest of the variable $\{i_1,i_2,\ldots,i_{2k}\}$ and $\{j_1,j_2,\ldots,j_{2k}\}$ will (cross) match completely and both belong to $A_{2k}'$. Now from (\ref{eqn:I_2k_c_k_RC}), we get 
 \begin{align}\label{reduced from 2}
 \lim_{n\to \infty} \frac{1}{n^{p+q-1}}  \sum_{I_{2k}}\E[x_{i_1}\ldots x_{i_{2p}}   x_{j_1}\ldots x_{j_{2q}}] 
 & =c_k\lim_{n\to \infty}\frac{1}{n^{2k-1}}\sum_{A_{2k}'}\sum_{A_{2k}'}\E[x_{i_1}\ldots x_{i_{2k}}x_{j_1}\ldots x_{j_{2k}}]\nonumber \\
 &=c_k\lim_{n\to \infty}\frac{1}{n^{2k-1}}\sum_{s,t=-(k-1)}^{k-1}\sum_{A_{2k,s}'}\sum_{A_{2k,t}'}\E[x_{i_1}\ldots x_{i_{2k}}x_{j_1}\ldots x_{j_{2k}}]\nonumber \\
 &=c_k \sum_{s=-(k-1)}^{k-1}\lim_{n\to \infty}\frac{|A_{2k,s}'| (2-{\bf 1}_{\{s=0\}})k!k!}{n^{2k-1}}.
 \end{align}
 The constant $(2-{\bf 1}_{\{s=0\}})$ appeared because for $s\neq 0$, $A_{2k,s}'$ can (cross) match completely  with $A_{2k,s}'$ and $A_{2k,-s}'$ . 
 The factor $k!^2$ appeared because for a fixed choice of $(i_1,\ldots,i_{2k})\in A_{2k,s}'$,  we can choose the same set of values  from $\{j_1,\ldots, j_{2k}\}$ in $k!k!$ different ways, permuting the odd positions and the even positions among themselves. 
 
Since from Result \ref{result:cardinalityA}, we have 
 \begin{align}\label{3}
 \lim_{n\to \infty}\frac{|A_{2k,s}'|}{n^{2k-1}}=\lim_{n\to \infty}\frac{|A_{2k,s}|}{n^{2k-1}}
 &=\frac{1}{(2k-1)!}\sum_{j=0}^{k+s-1}(-1)^j\binom{2k}{j}(k+s-j)^{2k-1}.
 \end{align}
 Therefore for each fixed $k\geq 2$, from (\ref{reduced from 2}) and (\ref{3}), we get
 
\begin{align}\label{reduced from 3}
 \lim_{n\to \infty} \frac{1}{n^{p+q-1}}  \sum_{I_{2k}}\E[x_{i_1}\ldots x_{i_{2p}}   x_{j_1}\ldots x_{j_{2q}}] 
 & = c_k \sum_{s=-(k-1)}^{k-1} \sum_{j=0}^{k+s-1}(-1)^j\binom{2k}{j}(k+s-j)^{2k-1}(2-{\bf 1}_{\{s=0\}})k!k! \nonumber \\
 &= c_k g(k), \mbox{ say}.
 \end{align} 
 
 Now  we are left with $k=1$ in Case II. If $k=1$ then from \eqref{eqn:constraint L}, we get  $\ell_1=\ell_2$ and therefore 
 \begin{align}\label{4}
 \lim_{n\to \infty}\frac{1}{n^{2p-1}}\sum_{I_{2}}\l(\E[x_{i_1}\ldots x_{i_{2p}}x_{j_1}\ldots x_{j_{2q}}]-\E[x_{i_1}\ldots x_{i_{2p}}]\E[x_{j_1}\ldots x_{j_{2q}}]\r) & =c_1(\E x_1^4-(\E x_1^2)^2) \nonumber\\
 &=(\E x_1^4-1)c_1,
 \end{align}
 where  $c_1=\binom{p}{p-1}^2(p-1)! \binom{q}{q-1}^2(q-1)!.$
 Hence from (\ref{eqn:I_2k2_RC}), \eqref{reduced from 3} and \eqref{4}, we get 
 $$
 \lim_{n\to\infty} \Cov\big(w_p,w_q\big)=\sum_{k=2}^{\min\{p,q\}} c_kg(k)+(\E x_1^4-1)c_1,
 $$
 where  $c_k=\binom{p}{p-k}^2(p-k)! \binom{q}{q-k}^2(q-k)!$ and $g(k)$ is given by
 $$
 g(k)=\frac{1}{(2k-1)!}\sum_{s=-(k-1)}^{k-1}\sum_{j=0}^{k+s-1}(-1)^j\binom{2k}{j}(k+s-j)^{2k-1}(2-{\bf 1}_{\{s=0\}})k!k!.
 $$
This complete the proof of the theorem \ref{thm:revcircovar}.
\end{proof}

\section{Proof of Theorem \ref{thm:revcirpoly}}\label{sec:poly}

We begin with some notation and definitions which will be used in the proof. First recall $A_{2p}$ from \eqref{def:A_2p} in Section \ref{sec:poly},
$$A_{2p}=\{(i_1,\ldots,i_{2p})\in \mathbb N^{2p}\suchthat \sum_{k=1}^{2p}(-1)^ki_k=0 \mbox{ (mod $n$) }, 1\le i_1,\ldots,i_{2p}\le n\}.$$ 
For a  vector  $ J = (j_1, j_2, \ldots, j_{2p})\in A_{2p},$ we define a multi-set $S_J$ as 
\begin{equation}\label{def:S_j}
S_{J} = \{j_1, j_2, \ldots,j_{2p}\}.
\end{equation}
\begin{definition}\label{def:connected}
	Two vectors $J =(j_1, j_2, \ldots, j_{2p})$ and $J' = (j'_1, j'_2, \ldots, j'_{2p})$, where $J \in A_{2p}$ and $J' \in A_{2q}$, are said to be \textit{connected} if
	$S_{J}\cap S_{J'} \neq \emptyset$.	
\end{definition}
For $1 \leq i \leq \ell$, suppose $J_i \in A_{2p_i}$. Now, we define cross-matched and self-matched element in $\displaystyle{\cup_{i=1}^{\ell} S_{J_i} }$.
\begin{definition}\label{def:cross matched}
	An element in $\displaystyle{\cup_{i=1}^{\ell} S_{J_i} }$ is called \textit{cross-matched} if it appears at least in two distinct $S_{J_i}$. If it appears in $k$ many ${S_{J_i}}^,s$, then we say its \textit{cross-multiplicity} is $k$.
\end{definition}
\begin{definition}\label{def:self-matched}
	An element in $\displaystyle{\cup_{i=1}^{\ell} S_{J_i} }$ is called \textit{self-matched} if it appears more than once in one of $S_{J_i}$. If it appears $k$ many times in $S_{J_i}$, then we say its \textit{self-multiplicity} in $S_{J_i}$ is $k$.
	
	An element in $\displaystyle{\cup_{i=1}^{\ell} S_{J_i} }$  can be both self-matched and cross-matched. If an element of $\displaystyle{\cup_{i=1}^{\ell} S_{J_i} }$ has \textit{cross-multiplicity} one, that means, it appears only in one of the ${S_{J_i}}$. 
\end{definition}	
\begin{definition}\label{def:cluster}
	Given a set of vectors $S= \{J_1, J_2, \ldots, J_\ell \}$, where $J_i \in A_{2p_i}$ for $1 \leq i \leq \ell$, a subset $T=\{J_{n_1}, J_{n_2}, \ldots, J_{n_k}\}$ of $S$ is called a \textit{cluster} if it satisfies the following two conditions: 
	\begin{enumerate}
		\item[(i)] For any pair $J_{n_i}, J_{n_j}$ from  $T$ one can find a chain of vectors from 
		$T$, which starts with $J_{n_i}$ and ends with $J_{n_j}$ such that any two neighbouring vectors in the chain are connected.
		\item[(ii)] The subset $\{J_{n_1}, J_{n_2}, \ldots, J_{n_k}\}$ can not  be enlarged to a subset which preserves condition (i).
	\end{enumerate}
\end{definition}
For more details about cluster, we refer the readers to \cite{maurya2019process}, where they have explained the structure of cluster by using graph.

Now we define $B_{P_\ell} \subseteq   A_{2p_1} \times A_{2p_2} \times \cdots \times A_{2p_\ell}$, where $A_{2p_i}$ is as defined in \eqref{def:A_2p}.  
\begin{definition}\label{def:B_{P_l}}
	Let $\ell \geq 2$ and  $P_\ell = (2p_1,2p_2, \ldots, 2p_\ell ) $. Now $ B_{P_\ell}$ is a subset of $ A_{2p_1} \times A_{2p_2} \times \cdots \times A_{2p_\ell}$ such that  $ (J_1, J_2, \ldots, J_\ell) \in B_{P_\ell} $ if 
	\begin{enumerate} 
		\item[(i)] $\{J_1, J_2, \ldots, J_\ell\} $ form a cluster, 
		\item[(ii)] each element in  $\displaystyle{\cup_{i=1}^{\ell} S_{J_i} }$ has  multiplicity greater than or equal to two. 
	\end{enumerate}
\end{definition} 
 The next lemma gives us the cardinality of $B_{P_\ell}$.
\begin{lemma}\label{lem:cluster}
	For  $\ell \geq 3 $, 
	\begin{equation}\label{equation:cluster}
	|B_{P_\ell }| = o \big(n^{p_1+p_2 + \cdots + p_\ell -\frac{\ell}{2} }\big).
	\end{equation}
\end{lemma}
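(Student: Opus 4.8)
The goal is to show that the number of tuples $(J_1,\dots,J_\ell)$ with $J_i\in A_{2p_i}$ that (a) form a cluster and (b) have every entry of $\cup_i S_{J_i}$ of multiplicity $\ge 2$ is $o(n^{p_1+\cdots+p_\ell-\ell/2})$, i.e. of strictly smaller order than the ``expected'' count $n^{p_1+\cdots+p_\ell-\ell/2}$. The whole argument is a counting (free-variable) argument: each $J_i\in A_{2p_i}$ carries one modular constraint $\sum_k(-1)^k(J_i)_k=0\pmod n$, so an unconstrained $J_i$ with all multiplicities $\ge 2$ has at most $p_i$ free coordinates (this is exactly the ``odd-even pair matched'' observation quoted from Definition \ref{def:odd-even}), hence $|B_{P_\ell}|\le$ (product) would give $n^{\sum p_i}$; the constraints and the cluster/cross-matching structure must be shown to kill an additional $\sum p_i \mapsto \sum p_i-\ell/2$ and then one more power of $n$ to get the strict $o(\cdot)$.

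**Key steps, in order.** First I would set up the bookkeeping: write $N=\sum_{i=1}^\ell (2p_i)$ for the total number of slots, let $r$ be the number of distinct values actually appearing in $\cup_i S_{J_i}$, and observe that choosing the tuple amounts to (i) choosing which slots carry which of the $r$ distinct values — a bounded (in $n$) combinatorial choice — and (ii) choosing the $r$ values in $\{1,\dots,n\}$ subject to the $\ell$ modular constraints. So $|B_{P_\ell}| = O(n^{\,r - \rho})$ where $\rho$ is the number of independent (over $\mathbb Z/n$) constraints among the $\ell$ given ones; the combinatorial factor is $O(1)$ since $N$ is fixed. Thus it suffices to prove $r-\rho \le p_1+\cdots+p_\ell-\ell/2 - 1$, or more cleanly $r \le \tfrac12\sum_i 2p_i - \tfrac{\ell}{2}$ when all $\ell$ constraints are independent, with strict gain of $1$ coming from the cluster condition. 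Second, I would bound $r$: since every value has total multiplicity $\ge 2$ across the $N$ slots, $r\le N/2=\sum_i p_i$; but the cluster condition forces cross-matching to ``glue'' the $\ell$ vectors together — a connected cluster on $\ell$ vectors needs at least $\ell-1$ ``cross-matching incidences,'' and each such shared value is counted once in $r$ but occupies $\ge 2$ slots in *different* $J_i$'s, so it cannot simultaneously serve as an odd-even self-match inside a single $J_i$. Carefully accounting for this (a shared value of cross-multiplicity $c$ and total multiplicity $m\ge 2$ ``wastes'' slots relative to the all-self-matched ideal) yields $r \le \sum_i p_i - \lceil (\ell-1)/2\rceil$, and for $\ell\ge 3$ this already beats $\sum p_i - \ell/2$ — giving the $o(\cdot)$ rather than $O(\cdot)$. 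Third, I would handle the case where the $\ell$ constraints are *not* independent over $\mathbb Z/n$ (i.e. $\rho<\ell$): then the sum of some subset of the defining linear forms vanishes mod $n$, which one checks forces that sub-collection of vectors to be ``internally over-matched'' (a value has a sign-imbalance), and re-run the free-variable count — this case only helps, decreasing $r$ further while decreasing $\rho$ by a matching or smaller amount. Finally, assemble: in every case $r-\rho \le \sum_i p_i - \ell/2 - (\text{something} \ge \tfrac12) $ for $\ell\ge 3$, hence $|B_{P_\ell}| = O(n^{\sum p_i - \ell/2 - 1/2}) = o(n^{\sum p_i - \ell/2})$.

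**Main obstacle.** The delicate point is the *joint* accounting of (i) the loss of free variables from self-matching inside each $J_i$ — governed by the odd-even pair-matching lemma, which is tight only in the ``perfect'' configuration — (ii) the loss from cross-matching needed to keep the cluster connected, and (iii) the interaction of the $\ell$ modular constraints (whether they are independent, and by how much they shrink the value-space when they are not). These three effects are not independent: a value that is cross-matched is ``spent'' and can no longer help balance the self-matching parity inside a single vector, and a value sitting on a dependent combination of constraints is likewise constrained. I would organize this via an auxiliary multigraph on the $\ell$ vectors (as the paper alludes to in the reference to \cite{maurya2019process}) — vertices $J_1,\dots,J_\ell$, an edge for each shared value — and reduce the bound on $r$ to a statement about spanning connected subgraphs: a connected multigraph on $\ell$ vertices has $\ge \ell-1$ edges, and one shows each edge ``costs'' at least half a unit of free-variable budget beyond the within-vector $p_i$ bound. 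Making that half-unit bookkeeping airtight — in particular ruling out the boundary configurations where a single value of high multiplicity seems to do double duty — is where the real work lies; everything else is the routine free-variable count already rehearsed for $|I'_{2k-1}|$ and $H$ in the proof of Theorem \ref{thm:revcircovar}.
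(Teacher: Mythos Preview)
Your overall strategy---reduce to counting $O(n^{r-\rho})$ with $r$ the number of distinct values and $\rho$ the rank of the $\ell$ modular constraints, then bound $r-\rho$---is a legitimate and genuinely different route from the paper's induction on $\ell$. However, the execution has a real gap.

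The bound $r \le \sum_i p_i - \lceil (\ell-1)/2\rceil$ is false. Take $\ell=3$ and the configuration from Case~I of the paper's base case: $J_1$ shares two distinct values with $J_2$, $J_2$ shares two other distinct values with $J_3$, and all remaining slots in each $J_i$ are odd-even self-matched. Each cross-matched value has cross-multiplicity $c=2$ and total multiplicity $m=2$, so by your own accounting its ``waste'' is $m/2-1=0$. Hence $r=\sum_i p_i$ exactly, contradicting your claimed bound $r\le\sum_i p_i-1$. What actually happens in this configuration is that the three constraints become \emph{linearly dependent}: once the self-matched parts cancel, the reduced constraint on $J_2$ is a signed sum of the reduced constraints on $J_1$ and $J_3$, so $\rho=2$ and $r-\rho=\sum_i p_i-2$, which is the correct maximum. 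So your Step~3 is backwards: constraint dependence does not ``force internal over-matching'' and thereby decrease $r$; rather, the maximal-$r$ configuration is precisely the one in which the constraints collapse.

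The upshot is that $r$ and $\rho$ cannot be bounded separately---they trade off against each other, and only the combination $r-\rho$ admits the needed bound. The paper sidesteps this by induction: it peels off one vector $J_{k+1}$, extracts its contribution $O(n^{p_{k+1}+r'/2-1})$ \emph{including} its constraint, and applies the hypothesis to the remaining cluster on shorter vectors. If you want to push through your direct approach, you would need to prove directly that $r-\rho\le\sum_i p_i-(\ell-1)$ for every cluster configuration with $\ell\ge 2$; this is true and would give the lemma (since $\ell-1>\ell/2$ for $\ell\ge 3$), but it requires a joint analysis of the matching pattern and the sign structure of the constraints---essentially the same parity bookkeeping the paper does in Cases~I and~II of Theorem~\ref{thm:revcircovar}, now carried through the whole cluster at once rather than two vectors at a time.
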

\begin{remark}
	The above lemma is not true if $\ell=2$ and $p_1= p_2$. Suppose $(J_1,J_2)\in B_{P_2}$. Then all $2p_1$ entries of $J_1$ may coincides with $2p_2(=2p_1)$ many entries of $J_2$ and hence 
	$$|B_{P_2}|=O(n^{2p_1-1}).$$
	So in this situation, $|B_{P_2}|>o(n^{p_1+p_2-1})$.
\end{remark}
The above lemma  is similar to Lemma 15 of \cite{maurya2019process}, where the authors have proved a similar result  with a different set of constraints on the sets $A_{2p_1},\ldots,A_{2p_k}$. To prove Lemma \ref{lem:cluster}, we shall use similar ideas. We prove Lemma \ref{lem:cluster} after the proof of Theorem \ref{thm:revcirpoly}. The following lemma is an easy consequence of Lemma \ref{lem:cluster}.

\begin{lemma}\label{lem:maincluster}
	Suppose $\{J_1, J_2, \ldots, J_\ell \} $ form a cluster where $J_i\in A_{2p_i}$ with $p_i\geq 1$ for $1\leq i\leq \ell$, $\{x_i\}_{i \geq 1}$ is independent and satisfies (\ref{eqn:condition}). Then for $\ell \geq 3,$
	\begin{equation}\label{equation:maincluster}
	\frac{1}{ n^{p_1+p_2+ \cdots + p_\ell - \frac{\ell}{2}}} \sum_{A_{2p_1}, A_{2p_2}, \ldots, A_{2p_\ell}} \E\Big[\prod_{k=1}^{\ell}\Big(x_{J_k} - \E(x_{J_k})\Big)\Big] = o(1),
	\end{equation}
	where 
	$$J_k = (j^{k}_{1}, j^{k}_{2}, \ldots, j^{k}_{2p_k} ) \ \mbox{and} \ x_{J_k} = x_{j^{k}_{1}} x_{j^{k}_{2}} \cdots x_{j^{k}_{2p_k}}.$$	
\end{lemma}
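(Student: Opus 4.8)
The plan is to show that, after expanding the centred products and applying independence, only tuples lying in the set $B_{P_\ell}$ of Definition \ref{def:B_{P_l}} contribute, and then to invoke Lemma \ref{lem:cluster} for the count. First I would expand, for each subset $T \subseteq \{1,\ldots,\ell\}$,
\begin{align*}
\prod_{k=1}^{\ell}\bigl(x_{J_k} - \E(x_{J_k})\bigr) = \sum_{T \subseteq \{1,\ldots,\ell\}} (-1)^{\ell - |T|} \Bigl(\prod_{k \in T} x_{J_k}\Bigr)\Bigl(\prod_{k \notin T} \E(x_{J_k})\Bigr),
\end{align*}
so that, taking expectations and using the independence of $\{x_i\}_{i\geq 1}$,
\begin{align*}
\E\Bigl[\prod_{k=1}^{\ell}\bigl(x_{J_k} - \E(x_{J_k})\bigr)\Bigr] = \sum_{T \subseteq \{1,\ldots,\ell\}} (-1)^{\ell - |T|} \Bigl(\prod_{a} \E\bigl[x_a^{m_a(T)}\bigr]\Bigr)\Bigl(\prod_{k \notin T} \E(x_{J_k})\Bigr),
\end{align*}
where $m_a(T)$ denotes the number of times the index $a$ occurs in $\bigcup_{k \in T} S_{J_k}$ and the inner product runs over the indices actually appearing there.

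Next I would argue that this expectation vanishes unless every index in $\bigcup_{i=1}^{\ell} S_{J_i}$ has multiplicity at least two. Indeed, suppose some index $a$ occurs exactly once in the whole collection, say only once inside $S_{J_{k_0}}$ and in no other $S_{J_k}$. Then in each summand above $x_a$ enters to the first power and is multiplied only by factors depending on indices different from $a$: if $k_0 \in T$ the summand carries the factor $\E[x_a] = 0$, while if $k_0 \notin T$ it carries $\E(x_{J_{k_0}}) = \E[x_a]\,\E[\,\cdots\,] = 0$, again by independence. Hence every summand vanishes. Combined with the hypothesis that $\{J_1,\ldots,J_\ell\}$ forms a cluster, the nonzero terms of the sum over $A_{2p_1}\times\cdots\times A_{2p_\ell}$ are precisely those with $(J_1,\ldots,J_\ell) \in B_{P_\ell}$.

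It then remains to bound the summand uniformly in $n$ and to count. Writing $P = p_1 + \cdots + p_\ell$ and using the moment bounds $\sup_{i} \E|x_i|^k = \alpha_k < \infty$ from (\ref{eqn:condition}), every factor $\E[x_a^{m}]$ with $m \le 2P$ is bounded by $\max\{1,\alpha_{2P}\}$, and likewise $|\E(x_{J_k})| \le \max\{1,\alpha_{2P}\}$; since the expansion has $2^{\ell}$ terms, this gives $\bigl|\E[\prod_{k}(x_{J_k} - \E(x_{J_k}))]\bigr| \le C$ for a constant $C = C(p_1,\ldots,p_\ell)$ independent of $n$. Therefore
\begin{align*}
\Bigl|\frac{1}{n^{P - \frac{\ell}{2}}} \sum_{A_{2p_1}, \ldots, A_{2p_\ell}} \E\Bigl[\prod_{k=1}^{\ell}\bigl(x_{J_k} - \E(x_{J_k})\bigr)\Bigr]\Bigr| \le \frac{C\,|B_{P_\ell}|}{n^{P - \frac{\ell}{2}}},
\end{align*}
which is $o(1)$ for $\ell \ge 3$ by Lemma \ref{lem:cluster}.

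The only delicate point is the vanishing argument of the second step — checking that the centring genuinely forces every index in $\bigcup_{i} S_{J_i}$ to have multiplicity at least two — together with keeping track of the fact that the outer sum ranges over cluster-forming tuples, so that Lemma \ref{lem:cluster} is directly applicable. All the real combinatorial effort is contained in Lemma \ref{lem:cluster} itself, so beyond this bookkeeping there is no serious obstacle.
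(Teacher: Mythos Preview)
Your proposal is correct and follows essentially the same approach as the paper: reduce the sum to $B_{P_\ell}$ by showing the centred expectation vanishes unless every index has multiplicity at least two, bound the expectation uniformly using the moment assumptions, and then invoke Lemma~\ref{lem:cluster}. The paper states the vanishing step in one line (``because $\E(x_i)=0$''), whereas you spell it out via the inclusion--exclusion expansion over subsets $T$; this is a harmless elaboration of the same idea, and your remark that the outer sum is implicitly restricted to cluster-forming tuples is exactly how the paper uses the lemma.
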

\begin{proof} First observe that   $\E\Big[\prod_{k=1}^{\ell}\Big(x_{J_k} - \E(x_{J_k})\Big)\Big]$ will be non-zero only if each $x_i$ appears at least twice in the collection $\{ x_{j^{k}_{1}}, x_{j^{k}_{2}}, \ldots ,x_{j^{k}_{2p_k}} ; 1\leq k\leq \ell\}$, because $\E(x_i)=0$ for each $i$. Therefore
	\begin{equation}\label{eqn:equality_reduction}
	\sum_{A_{2p_1}, \ldots, A_{2p_\ell}} \hspace{-3pt}\E\Big[\prod_{k=1}^{\ell}\Big(x_{J_k} - \E(x_{J_k})\Big)\Big]=\sum_{(J_1,\ldots,J_\ell)\in B_{P_\ell}} \hspace{-3pt} \E\Big[\prod_{k=1}^{\ell}\Big(x_{J_k} - \E(x_{J_k})\Big)\Big],
	\end{equation} 
	where $B_{P_\ell}$ as in Definition \ref{def:B_{P_l}}. Since from (\ref{eqn:condition}), we have  
	\begin{equation*}\label{eqn:higher moment finite}
	\E(x^2_i)=1 \mbox{ and } \sup_{i \geq 1}\E(|x_i|^{k})= \alpha_k < \infty \mbox{ for } k \geq 3.
	\end{equation*}  
	 Therefore for $p_1,p_2,\ldots,p_\ell \geq 1$, there exists $\beta_\ell>0$, which depends only on $p_1,p_2,\ldots,p_\ell$, such that  
	\begin{equation}\label{eqn:modulus finite}
	\Big|\E\big[\prod_{k=1}^{\ell}\big(x_{J_k} - \E(x_{J_k})\big)\big]\Big|\leq \beta_\ell
	\end{equation} 
	for all $(J_1, J_2, \ldots, J_\ell)\in A_{2p_1}\times A_{2p_2}\times \cdots \times A_{2p_\ell}$.
	
	Now using \eqref{eqn:equality_reduction} and \eqref{eqn:modulus finite}, we have
	\begin{align*}
	\sum_{A_{2p_1}, A_{2p_2}, \ldots, A_{2p_\ell}} \Big|\E\big[\prod_{k=1}^{\ell}\big(x_{J_k} - \E(x_{J_k})\big)\big]\Big|
	& \leq  \sum_{(J_1,J_2,\ldots,J_\ell)\in B_{P_\ell}} \beta_{\ell} 
	\ = |B_{p_\ell}| \ \beta_\ell.
	\end{align*}
	By using Lemma \ref{lem:cluster} in above expression, we get
	\begin{align*}
	\sum_{A_{2p_1}, A_{2p_2}, \ldots, A_{2p_\ell}} \Big|\E\big[\prod_{k=1}^{\ell}\big(x_{J_k} - \E(x_{J_k})\big)\big]\Big| = o \big(n^{p_1+p_2 + \cdots + p_\ell -\frac{\ell}{2} }\big),
	\end{align*}
	and hence 
	\begin{equation*}
	\frac{1}{ n^{p_1+p_2+ \cdots + p_\ell - \frac{\ell}{2}}} \sum_{A_{2p_1}, A_{2p_2}, \ldots, A_{2p_\ell}} \E\Big[\prod_{k=1}^{\ell}\Big(x_{J_k} - \E(x_{J_k})\Big)\Big] = o(1).
	\end{equation*}
	
	This completes the proof of lemma. 
\end{proof}

We shall use the above lemmata and Theorem \ref{thm:revcircovar} to prove Theorem \ref{thm:revcirpoly}. 

\begin{proof}[Proof of Theorem \ref{thm:revcirpoly}] We use method of moments and  Wick's formula to prove Theorem \ref{thm:revcirpoly}. First recall that from the method of moments, to prove $w_Q \stackrel{d}{\longrightarrow} N(0,\sigma_{Q}^2)$, it is sufficient to show that
	\begin{align} \label{eqn:moment w_Q}
	\lim_{n\to\infty} \E[ (w_Q)^\ell] = \E[ (N(0, \sigma^2_{Q}))^\ell ] \ \ \forall \ \ell=1,2, \ldots.
	\end{align}
	So, to prove (\ref{eqn:moment w_Q}), it is enough to show that, for $p_1, p_2, \ldots , p_\ell \geq 1$,
	$$\lim_{n\to\infty}\E[w_{p_1}w_{p_2} \cdots w_{p_\ell}]=\E[N_{p_1}N_{p_2} \cdots N_{p_\ell}],$$
	where $\{N_{p}\}_{p \geq 1}$ is a centered Gaussian family with covariance $\sigma_{p,q}$, as in (\ref{eqn:sigma_p,q}).
	Now using trace formula (\ref{trace formula RC_n}), we have  
	\begin{align*}
	w_{p_k}  = \frac{1}{\sqrt{n}} \Big(\Tr(RC_n)^{2p_k} - \E[\Tr(RC_n)^{2p_k}]\Big)
	= \frac{1}{n^{p_k -\frac{1}{2}}} \sum_{A_{2p_k}} \Big( x_{j^{k}_{1}}\cdots x_{j^{k}_{2p_k}} - \E[x_{j^{k}_{1}}\cdots x_{j^{k}_{2p_k}}]\Big).
	\end{align*}
	Note that in the above summation $(j^{k}_{1}, j^{k}_{2},\ldots, j^{k}_{2p_k})\in A_{2p_k}$.
	Therefore 
	\begin{align}\label{eqn:expectation_thm2}
	&\quad \E[w_{p_1}w_{p_2} \cdots w_{p_\ell}] \\
	&= \frac{1}{n^{p_1 + p_2 + \cdots +p_\ell -\frac{\ell}{2}}} \sum_{A_{2p_1}, A_{2p_2}, \ldots, A_{2p_\ell}} \E\big[ (x_{J_1} - \E x_{J_1}) (x_{J_2} - \E x_{J_2})  \cdots (x_{J_\ell} - \E x_{J_\ell})\big].\nonumber
	\end{align}
	Now for a fixed $J_1,J_2,\ldots,J_\ell$, if there exists a $k\in\{1,2,\ldots,\ell\}$ such that $J_k$ is not connected with any $J_i$ for $i\neq k$, then 
	$$\E\big[ (x_{J_1} - \E x_{J_1}) (x_{J_2} - \E x_{J_2})  \cdots (x_{J_\ell} - \E x_{J_\ell})\big]=0$$
	due to the independence of $\{x_i\}_{i\geq 1}$.
	
	Therefore $J_1,J_2,\ldots,J_\ell$ must form  clusters with each cluster length greater than or equal to two, that is, each cluster should contain at least two vectors. Suppose $G_1,G_2,\ldots,G_s$ are the clusters formed by  vectors $J_1,J_2,\ldots,J_\ell$ and   $|G_i|\geq 2$ for all $1\leq i \leq s$ where $|G_i|$ denotes the length of the cluster $G_i$. Observe that $\sum_{i=1}^s |G_i|=\ell$.   
	
	If there exists  a cluster $G_j$ among $G_1,G_2,\ldots,G_s$ such that $|G_j|\geq 3$, then from Theorem \ref{thm:revcircovar} and Lemma \ref{lem:maincluster}, we have  
	\begin{align*}
	\frac{1}{n^{p_1 + p_2 + \cdots +p_\ell -\frac{\ell}{2}}} \sum_{A_{2p_1}, A_{2p_2}, \ldots, A_{2p_\ell}} \E\big[ (x_{J_1} - \E x_{J_1}) (x_{J_2} - \E x_{J_2})  \cdots (x_{J_\ell} - \E x_{J_\ell})\big]=o(1).
	\end{align*}  
	Therefore, if $\ell$ is odd then there will be a cluster of odd length and hence 
	$$ \lim_{n\tends \infty}  \E[w_{p_1} w_{p_2}\cdots w_{p_\ell}] = 0.$$ 
	
	Similarly, if $\ell$ is even then the contribution due to $\{ J_1, J_2, \ldots, J_\ell \}$ to  $ \E[w_{p_1} w_{p_2}\cdots w_{p_\ell}]$ is $O(1)$ only when $\{ J_1, J_2, \ldots, J_\ell\}$  decomposes into clusters of length 2. Therefore from \eqref{eqn:expectation_thm2}, we get
	\begin{align*} \label{eq:multisplit}
	& \quad \lim_{n\tends \infty}  \E[w_{p_1} w_{p_2}\cdots w_{p_\ell}]\\
	& =\lim_{n\to\infty} \frac{1}{n^{p_1 + p_2 + \cdots +p_\ell -\frac{\ell}{2}}} \sum_{A_{2p_1}, A_{2p_2}, \ldots, A_{2p_\ell}} \E\big[ (x_{J_1} - \E x_{J_1}) (x_{J_2} - \E x_{J_2})  \cdots (x_{J_\ell} - \E x_{J_\ell})\big]\\
	& =\lim_{n\to\infty} \frac{1}{n^{p_1 + p_2 + \cdots +p_\ell -\frac{\ell}{2}}} \sum_{\pi \in \mathcal P_2(\ell)} \prod_{i=1}^{\frac{\ell}{2}}  \sum_{A_{2p_{y(i)}},\ A_{2p_{z(i)}}} \E\big[ (x_{J_{y(i)}} - \E x_{J_{y(i)}}) (x_{J_{z(i)}} - \E x_{J_{z(i)}})\big],
	\end{align*}
	where  $\pi = \big\{ \{y(1), z(1) \}, \ldots , \{y(\frac{\ell}{2}), z(\frac{\ell}{2})  \} \big\}\in \mathcal P_2(\ell)$ and $\mathcal P_2(\ell)$ is the set of all pair partition of $ \{1, 2, \ldots, \ell\} $. Using Theorem \ref{thm:revcircovar}, from the last equation we get
	\begin{equation}\label{eqn:product of expectation}
	\lim_{n\tends \infty}  \E[w_{p_1}w_{p_2} \cdots w_{p_\ell}]
	=\sum_{\pi \in P_2(\ell)} \prod_{i=1}^{\frac{\ell}{2}} \lim_{n\tends \infty} \E[w_{p_{y(i)}} w_{p_{z(i)}}].
	\end{equation}
    Since from Theorem \ref{thm:revcircovar}, we have 
	$$\lim_{n\to\infty}\E(w_p w_q) = \sigma_{p,q} = \E(N_p N_q).$$
	Therefore using Wick's formula,  from \eqref{eqn:product of expectation} we get 
	\begin{align*}
	\lim_{n\tends \infty}  \E[w_{p_1}w_{p_2} \cdots w_{p_\ell}]
	& =\sum_{\pi \in P_2(\ell)} \prod_{i=1}^{\frac{\ell}{2}} \lim_{n\tends \infty} \E[w_{p_{y(i)}} w_{p_{z(i)}}]\\
	& =\sum_{\pi \in \mathcal P_2(\ell)} \prod_{i=1}^{\frac{\ell}{2}} \E[N_{p_{y(i)}} N_{p_{z(i)}} ] \\
	&=\E[ N_{p_1}N_{p_2} \cdots N_{p_\ell} ].
	\end{align*}
	This completes the proof of Theorem \ref{thm:revcirpoly}.
\end{proof}
Now we prove Lemma \ref{lem:cluster}.
\begin{proof}[Proof of Lemma \ref{lem:cluster}]
	We define  subsets $B^{'}_{P_\ell}$ and $B^{''}_{P_\ell}$ of $B_{P_\ell}$ as
	\begin{align}\label{def:B'}
	B^{'}_{P_\ell} =  \{ (J_1, J_2, \ldots, J_\ell) \in B_{P_\ell}  :& \mbox{ \textit{cross-multiplicity} of each entry of }\cup_{i=1}^{\ell} S_{J_i}  \\
	&	 \mbox{ is less than or equal to two} \}\nonumber
	\end{align} 
	and
	\begin{equation}\label{def:B''}
	B^{''}_{P_\ell} =  B_{P_\ell} \setminus B^{'}_{P_\ell}.
	\end{equation}
	Observe that $$ |B_{P_\ell}| = |B^{'}_{P_\ell}|+ |B^{''}_{P_\ell}|.$$
	Since in $B^{''}_{P_\ell}$ there exists at least one entry with \textit{cross-multiplicity} greater than or equal to three, the maximum number of free entries (which can be chosen freely from $1$ to $n$) in $B^{''}_{P_\ell}$ will be less than or equal to the maximum number of free entries in $B^{'}_{P_\ell}.$ Therefore 
	$$|B^{''}_{P_\ell}| \leq |B^{'}_{P_\ell}|$$
	and hence
	\begin{equation} \label{eqn:B_P,B_P'}
	|B_{P_\ell}| = O(|B^{'}_{P_\ell}|).
	\end{equation}
	So to prove (\ref{equation:cluster}), it is enough to prove that
	\begin{equation}\label{equation:cluster1}
	|B^{'}_{P_\ell }| = o \big(n^{p_1+p_2 + \cdots + p_\ell -\frac{\ell}{2} }\big).
	\end{equation}
	Now we shall prove (\ref{equation:cluster1}) by using mathematical induction on $\ell$. Let us first prove it for $\ell=3$. Suppose $(J_1, J_2, J_3)\in B_{P_3}$ with $J_i = (j^{i}_1, j^{i}_2, \ldots, j^{i}_{2p_i})$ for $i=1, 2, 3,$ then $(J_1, J_2, J_3)$ form a cluster and they can be connected in the following way:\\
	Suppose $r$ many entries of $J_1$ matches with  $r$ many entries of $J_2$ and $J_3$ both; $r_1$ many entries of $J_1$ matches with $r_1$ many entries from $J_2$ only; $r_2$ many entries of $J_2$ matches with $r_2$ many entries of $J_3$ only; $r_3$ many entries of $J_3$ matches with $r_3$ many entries of $J_1$ only, where
	\begin{equation}\label{eqn:r_3}
	r, r_1, r_2, r_3 \geq  0 \ , \ r+r_1 + r_2 \leq  2p_2,\ r+r_1 + r_3\leq 2p_1, \ r+r_2+r_3\leq 2p_3. 
	\end{equation}  
	\begin{figure}[h]
		\centering\vskip-10pt
		\includegraphics[height=40mm, width =50mm ]{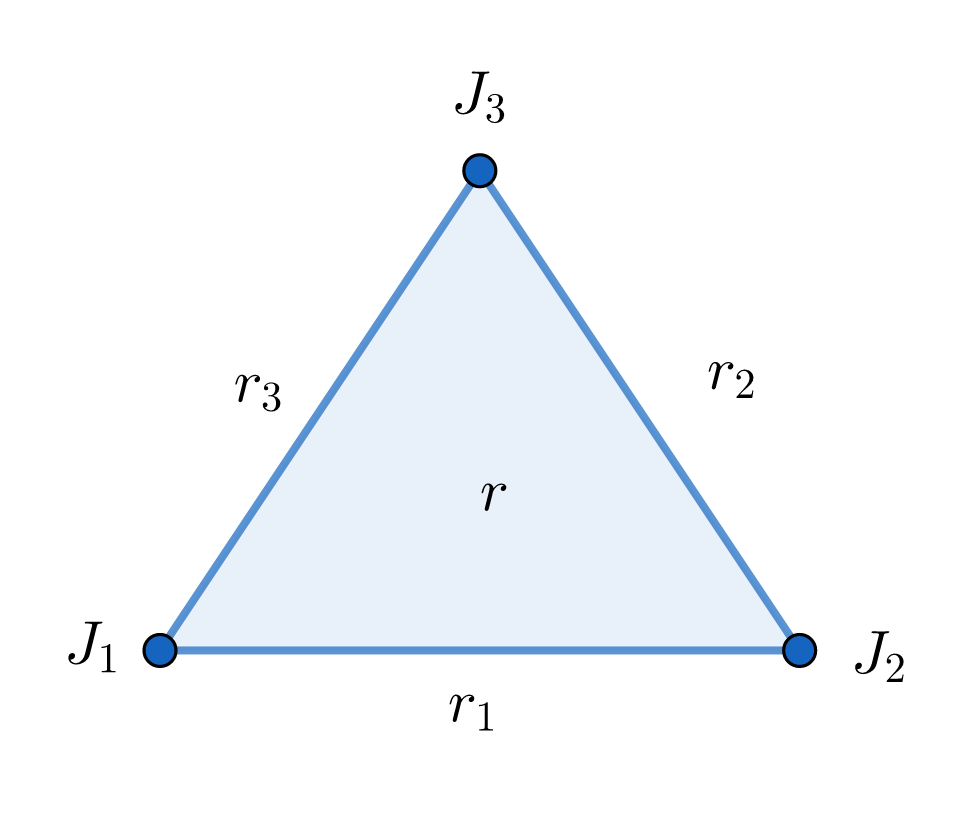}
		\vskip-10pt
		\caption{Connection between $J_1,J_2$ and $J_3$.}	 
	\end{figure}  
	Note that if $r =0$, that is, all entries of $\displaystyle{\cup_{i=1}^{3} S_{J_i} }$ has \textit{cross-multiplicity} less than or equal to two, then $(J_1, J_2, J_3)$ belongs to $B^{'}_{P_3}$ as defined in (\ref{def:B'}). Also if $r\neq 0$, that is, there are $r$ many entries of $\displaystyle{\cup_{i=1}^{3} S_{J_i} }$ which has \textit{cross-multiplicity} three, then $(J_1, J_2, J_3)$ belongs to $B^{''}_{P_3}$ as defined in (\ref{def:B''}). Now we calculate cardinality of $B^{'}_{P_3}$, that is, the case $r =0$, to prove (\ref{equation:cluster}). 
	
	Recall from the Case I and Case II of proof of Theorem \ref{thm:revcircovar}, the number of free entries between two vectors which have same $k$ many entries, will be maximum when $k$ is even. Therefore to obtain maximum number of free entries from $B^{'}_{P_3}$, we shall assume each $r_i$ is even, for $i=1,2,3.$ Otherwise, we shall get less contribution.
	
	 Since $(J_1, J_2, J_3)$ is forming a cluster, therefore at least two $r_i$ have to be non-zero. Then the following two cases arise.\\
	\noindent \textbf{Case I.} Exactly two $r_i$ are non-zero and one $r_i$ is zero. Without loss of generality, we assume $r_3 = 0.$ Therefore we can represent $J_i$ as
	\begin{align*}
	J_1 & = (x^{1}_1, x^{1}_2, \ldots, x^{1}_{r_1}, j^{1}_{r_1+1}, j^{1}_{r_1+2}, \ldots, j^{1}_{2p_1}), \\
	J_2 &= (x^{1}_1, x^{1}_2, \ldots, x^{1}_{r_1}, x^{2}_1, x^{2}_2, \ldots, x^{2}_{r_2}, j^{2}_{r_1+r_2+1}, j^{2}_{r_1+ r_2+2}, \ldots, j^{2}_{2p_2}), \\
	J_3 & = (x^{2}_1, x^{2}_2, \ldots, x^{2}_{r_2}, j^{3}_{r_2+1}, j^{3}_{r_2+2}, \ldots, j^{3}_{2p_3}),
	\end{align*}
	where (\ref{eqn:r_3}) will be of the following form 
	$$   r_1 \leq 2p_1, \ r_1 + r_2 \leq  2p_2, \ r_2\leq 2p_3$$
	and  Figure $1$ will look like as Figure $2$.
	\begin{figure}[h]
		\centering\vskip-5pt
		\includegraphics[height=20mm, width =70mm ]{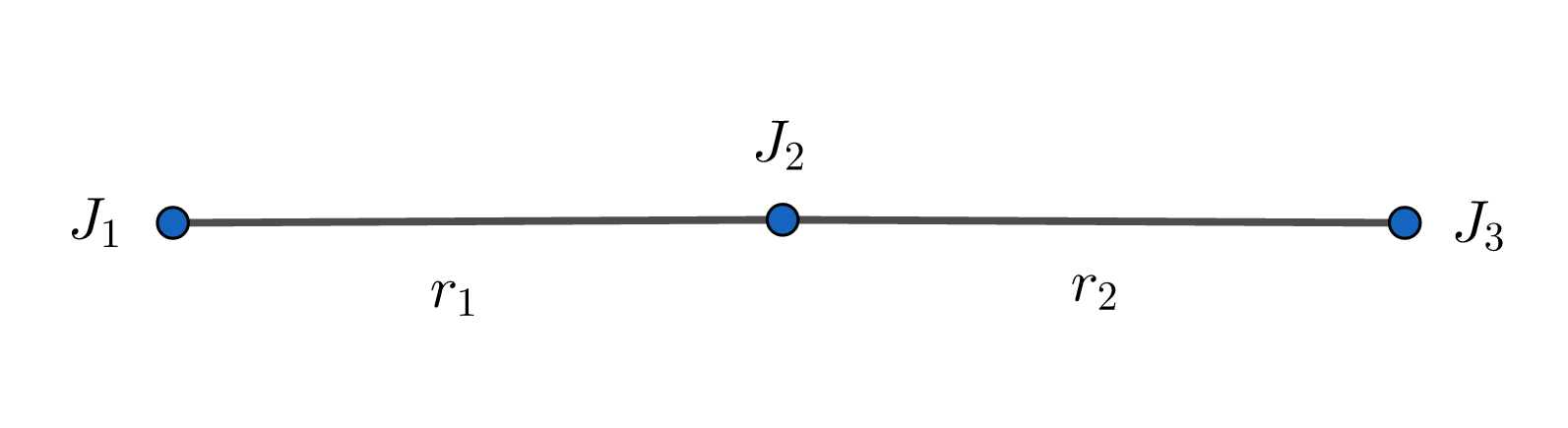}
		\vskip-10pt
		\caption{Connection between $J_1,J_2$ and $J_3$: Case I.}	
	\end{figure} 
As each entry in $\displaystyle{\cup_{i=1}^{3} S_{J_i} }$ has multiplicity at least two, then we shall get maximum contribution from $J_1$, if the following holds:
	\begin{enumerate}
		\item [(i)] each elements of $\{x^{1}_1, x^{1}_2, \ldots, x^{1}_{r_1} \}$ has to be distinct,
		\item [(ii)] $\{x^{1}_1, x^{1}_2, \ldots, x^{1}_{r_1} \}$ have no common element with $\{j^{1}_{r_1+1}, j^{1}_{r_1+2}, \ldots, j^{1}_{2p_1}\}$,
		\item [(iii)] $(j^{1}_{r_1+1}, j^{1}_{r_1+2}, \ldots, j^{1}_{2p_1})$ is {\it odd-even pair matched}. 
	\end{enumerate}
    Since $(j^{1}_{r_1+1}, j^{1}_{r_1+2}, \ldots, j^{1}_{2p_1})$ is {\it odd-even pair matched}, that is, $\sum_{s=r_1+1}^{2p_1} (-1)^s j^{1}_s = 0 \;{(\mbox{mod $n$})}$, therefore the constraint on the entries of $J_1$, that is, $\sum_{s=1}^{r_1} (-1)^s x^{1}_s + \sum_{s=r_1+1}^{p_1} (-1)^s j^{1}_s = 0 \;{(\mbox{mod $n$})}$ will change automatically into $\sum_{s=1}^{r_1} (-1)^s x^{1}_s = 0 \;{(\mbox{mod $n$})}$. So the maximum contribution from $J_{1}$ is $O\big(n^{(r_1+ \frac{2p_1-r_1}{2}-1)}\big)$, where $(-1)$ is coming due to the constraint $\sum_{s=1}^{r_1} (-1)^s x^{1}_s = 0 \;{(\mbox{mod $n$})}$.
	Now after fixing entries in $J_1$, by similar argument, the maximum contribution from $J_3$ will be $O\big(n^{(r_2 + \frac{2p_3-r_2}{2}-1)}\big)$, where $(-1)$ is coming due to the constraint $\sum_{s=1}^{r_2} (-1)^s x^{2}_s = 0 \;{(\mbox{mod $n$})}$. After selecting $J_1, J_3;$ 
	the contribution from $J_2$ will be $O\big(n^{\frac{2p_2-r_1-r_2}{2}}\big)$ when $(j^{2}_{r_1+r_2+1}, j^{2}_{r_1+ r_2+2}, \ldots, j^{2}_{2p_2})$ is {\it odd-even pair matched}. Note that the constraint of $J_2$ is automatically satisfied because we have already considered $\sum_{s=1}^{r_1} (-1)^s x^{1}_s = 0 \;{(\mbox{mod $n$})}$ and $\sum_{s=1}^{r_2} (-1)^s x^{2}_s = 0 \;{(\mbox{mod $n$})}$. That is why, $(-1)$ is not coming in the contribution of $J_2$. So in this case cardinality of $B^{'}_{P_3}$  will be   
	\begin{align*}
	&O\big(n^{(r_1+  p_1-\frac{r_1}{2}-1) + (r_2 + p_3-\frac{r_2}{2}-1) + ( p_2-\frac{r_1+r_2}{2})}\big) 
	= O\big(n^{ p_1+p_2+p_3-2}\big).
	\end{align*} 
	\noindent \textbf{Case II.} Each $r_i$ is non-zero for all $i = 1, 2, 3.$ Therefore we can represent $J_i$ as
	\begin{align*}
	J_1 & = (x^{1}_1, x^{1}_2, \ldots, x^{1}_{r_1}, x^{3}_1, x^{3}_2, \ldots, x^{3}_{r_3}, j^{1}_{r_1+r_3+1}, j^{1}_{r_1+r_3+2}, \ldots, j^{1}_{2p_1}), \\
	J_2 &= (x^{1}_1, x^{1}_2, \ldots, x^{1}_{r_1}, x^{2}_1, x^{2}_2, \ldots, x^{2}_{r_2}, j^{2}_{r_1+r_2+1}, j^{2}_{r_1+ r_2+2}, \ldots, j^{2}_{2p_2}), \\
	J_3 & = (x^{2}_1, x^{2}_2, \ldots, x^{2}_{r_2}, x^{3}_1, x^{3}_2, \ldots, x^{3}_{r_3}, j^{3}_{r_2+r_3+1}, j^{3}_{r_2+r_3+2}, \ldots, j^{3}_{2p_3}),
	\end{align*}
where 
	$ r_1 + r_3 \leq  2p_1,\ r_1 + r_2\leq 2p_2, \ r_2+r_3\leq 2p_3.$
	
Observe that each entry in $\displaystyle{\cup_{i=1}^{3} S_{J_i} }$ has multiplicity at least two. 
	Now by the similar arguments, as given in Case I, we shall calculate the contributions due to $J_i$. For the maximum contribution, $(2p_1-r_1-r_3)$ many entries in $J_1$ have to be {\it odd-even pair matched}, as we discussed in Case I. So the maximum contribution from $J_1$ will be $O\big(n^{(r_1+r_3+ \frac{2p_1-r_1-r_3}{2}-1)}\big)$, where $(-1)$ is coming due to the new constraint on the entries of $J_1$, that is, $\sum_{s=1}^{r_1}(-1)^s x^{1}_s + \sum_{s=1}^{r_3}(-1)^{r_1+s} x^{3}_s = 0 \;{(\mbox{mod $n$})}$. Now after fixing entries in $J_1$, by similar argument the maximum contribution from $J_3$ will be $O\big(n^{(r_2 + \frac{2p_3-r_2-r_3}{2}-1)}\big)$, where $(-1)$ is coming due to the new constraint on the entries of $J_3$, that is, $\sum_{s=1}^{r_2}(-1)^sx^{2}_s + \sum_{s=1}^{r_3}(-1)^{r_2+s} x^{3}_s= 0 \;{(\mbox{mod $n$})}$. After selecting $J_1, J_3;$ 
	by similar argument, the contribution from $J_2$ will be $O\big(n^{( \frac{p_2-r_1-r_2}{2})}\big)$. So in this case the cardinality of $B^{'}_{P_3}$  will be   
	\begin{align*}
	&O\big(n^{(r_1+r_3 + p_1-\frac{r_1+r_3}{2}-1) + (r_2 + p_3-\frac{r_2+r_3}{2}-1) + ( p_2-\frac{r_1+r_2}{2})}\big) 
	= O\big(n^{ p_1+p_2+p_3-2}\big).
	\end{align*} 
	Therefore after combining the Case I and Case II, we get
	$$|B^{'}_{P_3}|=  O\big(n^{ p_1+p_2+p_3-2}\big)= o\big(n^{ p_1+p_2+p_3-\frac{3}{2}}\big).$$
	This show that (\ref{equation:cluster1}) is true for $\ell =3.$ So Lemma \ref{lem:cluster} is true for $\ell=3$.
	
	Now for $r \neq 0$, one can show that $|B^{''}_{P_3}| = O\big(n^{ p_1+p_2+p_3-\frac{4+r}{2}}\big)$, which is less than the cardinality of $B^{'}_{P_3}$. We leave it to the readers to verify.
	
	 Suppose (\ref{equation:cluster1}) is true for $\ell = k$, that is, 
	$$|B^{'}_{P_k }| = o \big(n^{p_1+p_2 + \cdots + p_k -\frac{k}{2} }\big).$$
Now we shall prove it for $\ell=k+1$.	
	Suppose  $ (J_1, J_2, \ldots,J_k, J_{k+1}) \in B_{P_{k+1}} $ with $J_i = (j^{i}_1, j^{i}_2, \ldots, j^{i}_{2p_i} )$ for $i=1, 2, \ldots, k+1$, then $ \{J_1, J_2, \ldots, J_{k+1}\} $ forms a cluster. First consider $J_1$, since $ \{ J_1, J_2, \ldots, J_{k+1} \} $ form a cluster therefore there exists at least one $J_i \in \{ J_2, J_3, \ldots, J_{k+1} \}$ such that $J_i$ is connected with $J_1$, and without loss of generality, we suppose $ J_i = J_2$. So $J_2$ is connected with $J_1$. Again as $ \{ J_1, J_2, \ldots,  J_{k+1} \} $ forms a cluster, there exists at least one $J_i \in \{ J_3, J_4, \ldots, J_{k+1} \}$ such that $J_i$ is connected either with $J_1$ or $J_2$ or both, and without loss of generality, we assume that $ J_i = J_3$. Similarly we can arrange $ \{ J_4, J_5, \ldots, J_k \}$. Note that, due to the above arrangement, $\{ J_1, J_2, \ldots, J_k \}$ forms a cluster. But it is given that  $ \{ J_1, J_2, \ldots, J_k, J_{k+1} \} $ forms a cluster, therefore $J_{k+1}$ has to be connected with at least one of $\{ J_1, J_2, \ldots, J_k \}$. Note that, here $\{ J_1, J_2, \ldots, J_k, J_{k+1} \}$ are arranged in such a way that if we remove $J_{k+1}$ from the cluster formed by $\{ J_1, J_2, \ldots, J_k, J_{k+1} \}$, then $\{ J_1, J_2, \ldots, J_k \}$ also form a cluster.
	
	Now we consider $B^{'}_{P_{k+1}}$, that is, there is no element in $\displaystyle{\cup_{i=1}^{k+1} S_{J_i} }$ which has \textit{cross-multiplicity} more than two. Suppose for $i=1, 2, \ldots, k$; $r_i$ many entries of $J_{k+1}$ matches with $r_i$ many entries of $J_i$ only. 
	 Therefore we can represent $J_i$ as $$J_i = (x^{i}_1, x^{i}_2, \ldots, x^{i}_{r_i}, j^{i}_{r_i+1}, j^{i}_{r_i+2}, \ldots, j^{i}_{2p_i} ) \mbox{ for } i= 1, 2, \ldots, k,$$ and 
	
	\begin{equation} \label{eqn:J_{k+1}}
	J_{k+1} = (x^{1}_1, x^{1}_2, \ldots, x^{1}_{r_1}, x^{2}_{1}, x^{2}_{2}, \ldots, x^{2}_{r_2}, \ldots, x^{k}_{1}, \ldots, x^{k}_{r_k}, j^{k+1}_{r'+1}, j^{k+1}_{r'+2}, \ldots, j^{k+1}_{2p_{k+1}} ),
	\end{equation}
	where
	\begin{equation*}
	0 \leq r_i \leq 2p_i, \  \sum_{i=1}^{k} r_i \leq 2p_{k+1} \ \mbox{and} \ r' = \sum_{i=1}^{k}r_i.
	\end{equation*}
	Observe that $r_i$ is non-zero for at least one $i$, as $\{ J_1, J_2, \ldots, J_{k+1} \}$ form a cluster. Now we define a multi-set $X$ as
	\begin{align} \label{def:X}
	X & = \{  x^{1}_1, x^{1}_2, \ldots, x^{1}_{r_1}, x^{2}_1, x^{2}_2, \ldots, x^{2}_{r_2}, \ldots, x^{k}_1, x^{k}_2, \ldots, x^{k}_{r_k}\}.
	\end{align}
	\begin{figure}[h]
		\centering\vskip-10pt
		\includegraphics[height=70mm, width =60mm ]{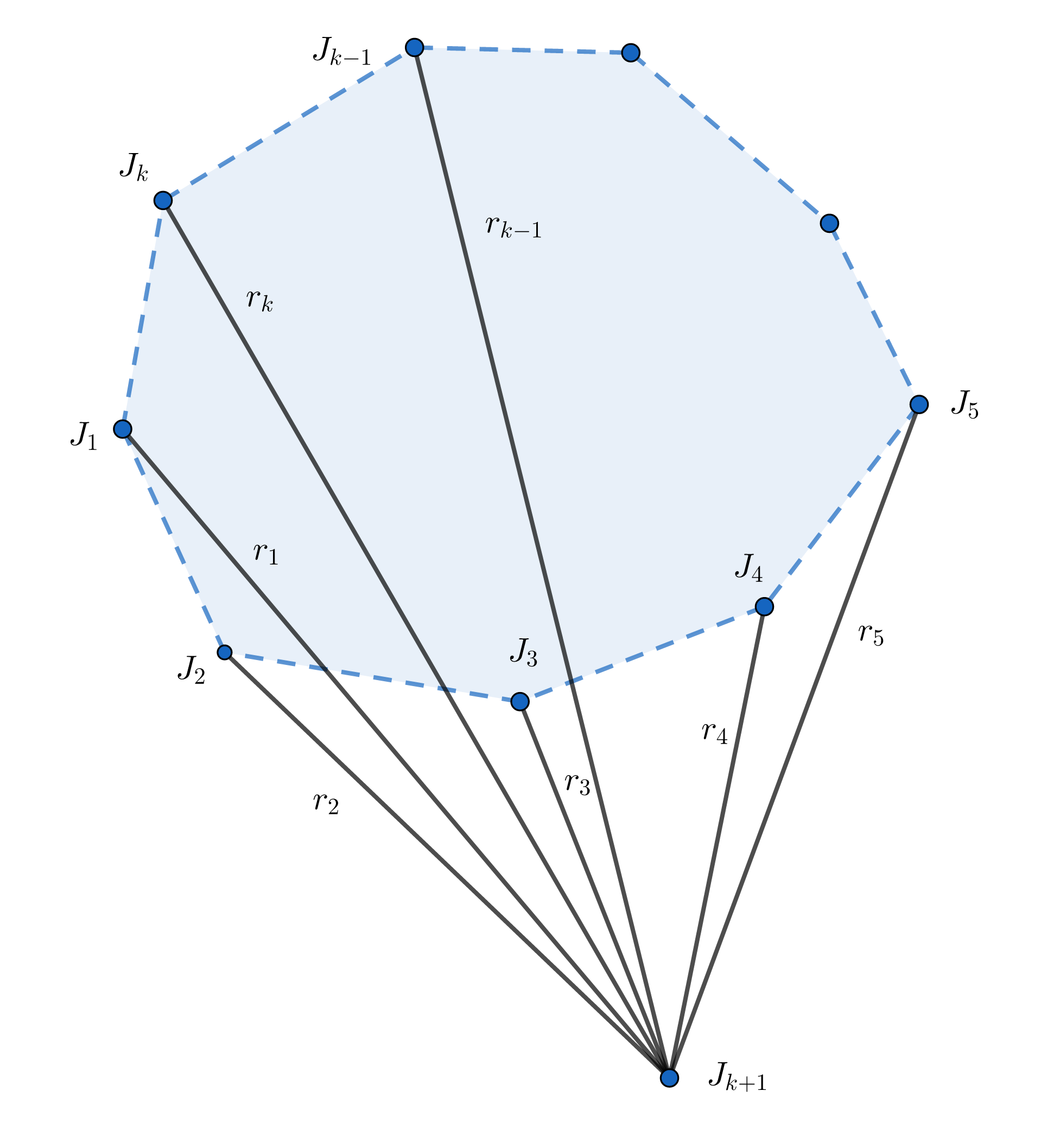}
		\vskip-10pt
		\caption{Connection between $J_1,J_2, \ldots J_k, J_{k+1}$}	
	\end{figure}
First note that $r_i \neq 2p_i$ for all $i=1, 2, \ldots, k.$ Otherwise we will get an element in $\displaystyle{\cup_{i=1}^{k+1} S_{J_i} }$ which has \textit{cross-multiplicity} greater than  two, as $\{ J_1, J_2, \ldots, J_{k} \}$ forms a cluster after removing $J_{k+1}$. That is not possible as we are considering the set $B^{'}_{P_{k+1}}$.
    
     Now let us calculate the contributions of $J_i$'s to the cardinality of $B^{'}_{P_{k+1}}$, as we have calculated for $\ell = 3$. Note that, we will get maximum number of free entries in $B^{'}_{P_{k+1}}$, when each $r_i$ will be even, as we have assumed for $\ell=3$ case and we have proved this in the Case I and Case II of proof of Theorem \ref{thm:revcircovar}. We first calculate the contribution from $J_{k+1}$. We shall get maximum contribution from $J_{k+1}$, if the following holds:
      \begin{enumerate}
     	\item [(i)] each elements of $X$ has to be distinct,
     	\item [(ii)] $X$ have no common element with $\{j^{k+1}_{r'+1}, j^{k+1}_{r'+2}, \ldots, j^{k+1}_{2p_{k+1}}\}$,
     	\item [(iii)]  $(j^{k+1}_{r'+1}, j^{k+1}_{r'+2}, \ldots, j^{k+1}_{2p_{k+1}})$ has to be {\it odd-even pair matched}, 
     \end{enumerate}
          where $r' =\sum_{i=1}^{k} r_i \leq 2p_{k+1}$. Recall that, $J_{k+1}$ has a constraint, namely,  
    $$ \sum_{s=1}^{r_1}(-1)^s x^{1}_s + \sum_{s=1}^{r_2}(-1)^{r_1+s}x^{2}_s+ \cdots + \sum_{s=1}^{r_k}(-1)^{r_1+r_2+\cdots +r_{k-1}+s} x^{k}_s+ \sum_{s=r'+1}^{2p_{k+1}}(-1)^s j^{k+1}_{s} = 0 \;{(\mbox{mod $n$})}.$$  
    Since $(j^{k+1}_{r'+1}, j^{k+1}_{r'+2}, \ldots, j^{k+1}_{2p_{k+1}})$ is {\it odd-even pair matched} and each elements of $X$ are distinct. Therefore the above constraint of $J_{k+1}$ will change into the new form
    \begin{equation}\label{eqn:J_k+1 constrain}
    \sum_{s=1}^{r_1}(-1)^sx^{1}_s + \sum_{s=1}^{r_2}(-1)^{r_1+s}x^{2}_s+ \cdots + \sum_{s=1}^{r_k}(-1)^{r_1+r_2+\cdots +r_{k-1}+s}x^{k}_s = 0 \;{(\mbox{mod $n$})}.
    \end{equation} 
    So the maximum contribution from $J_{k+1}$ is
    \begin{equation} \label{eqn:T_{k+1}}
    T_{k+1} = O\big(n^{r'+ \frac{2p_{k+1}-r'} {2}-1}\big),
    \end{equation}
    where $(-1)$ is coming in the expression of $T_{k+1}$ because $J_{k+1}$ has the constraint (\ref{eqn:J_k+1 constrain}).
    After selecting the entries of $J_{k+1}$, at least $r_i$ many entries of $J_i$ are also selected (fixed) and hence effective entries (remains to count) from $J_i$ is at most $(2p_i-r_i)$ for $i=1, 2, \ldots, k$. Note that if $ \{x^{i}_1, x^{i}_2, \ldots, x^{i}_{r_i}\}$ have no common element with $\{j^{i}_{r_i+1}, j^{i}_{r_i+2}, \ldots, j^{i}_{2p_i}\}$ for $i=1, 2, \ldots, k$, then after selecting the entries of $J_{k+1}$, exactly $r_i$ many entries of $J_i$ are also selected (fixed) and hence effective entries (remains to count) from $J_i$ is exactly $(2p_i-r_i)$ for $i=1, 2, \ldots, k$.
    Observe that the constraint on $J_i$, $ \sum_{s=1}^{r_i}(-1)^s x^{i}_s + \sum_{s=r_i+1}^{2p_i} (-1)^sj^{i}_s=0\;{(\mbox{mod $n$})}$ will change to $$\sum_{s=r_i+1}^{p_i} (-1)^s j^{i}_s= - d_{r_i}\;{(\mbox{mod $n$})},$$ where $ d_{r_i} = \sum_{s=1}^{r_i} (-1)^s x^{i}_s$.
    Since $|B^{'}_{P_{k+1}}|$ is product of $T_{k+1}$ and the number of free entries of $(J_1, J_2, \ldots, J_k)$ which remains to count after selecting $J_{k+1}$, that is, $ |\tilde{B}_{P_{k}}|$. Therefore the cardinality of $B^{'}_{P_{k+1}}$ will be
    \begin{equation} \label{eqn:B_{P_{k+1}}}
    |B^{'}_{P_{k+1}}| = |\tilde{B}_{P_{k}}| T_{k+1},
    \end{equation} 
    where $\tilde{B}_{P_{k}}$ is defined as,
    \begin{align}\label{def:B tilde}
    \tilde{B}_{P_{k}} =  \{ (J_1, J_2, \ldots, J_k) \in B^{'}_{P_k}  :& \ r_i \mbox{ many entries of } J_i  \\
    &	 \mbox{ are already chosen for } i=1,2, \ldots, k \}.\nonumber
    \end{align}
    Now we calculate the cardinality of $\tilde{B}_{P_{k}}$. 
    We denote $( j^{i}_{r_i+1}, j^{i}_{r_i+2}, \ldots, j^{i}_{p_i} )$ by $J'_i$ for $i=1, 2, \ldots, k.$
    Observe that, due to the arrangement of $J_1, J_2, \ldots, J_k,J_{k+1}$, if we remove $J_{k+1}$ from $B^{'}_{P_{k+1}}$,
    then $\{ J_1, J_2, \ldots, J_k \}$ also form a cluster and hence $\{ J'_1, J'_2, \ldots, J'_k\}$ also form a cluster. Since each element of $\displaystyle{\cup_{i=1}^{k+1} S_{J_i} }$ has \textit{cross-multiplicity} less than or equal to two, therefore each element of $\displaystyle{\cup_{i=1}^{k} S_{J'_i} }$ also has \textit{cross-multiplicity} less than or equal to two.
    As for the maximum contribution, $ \{x^{i}_1, x^{i}_2, \ldots, x^{i}_{r_i}\}$ have no common element with $\{j^{i}_{r_i+1}, j^{i}_{r_i+2}, \ldots, j^{i}_{p_i}\}$ for all $i=1, 2, \ldots, k$ and since each elements of $\displaystyle{\cup_{i=1}^{k+1} S_{J_i} }$ has multiplicity greater than or equal to two, therefore each element of $ \displaystyle{\cup_{i=1}^{k} S_{J'_i} }$ also has multiplicity greater than or equal to two.
    From the above discussion on $\{ J'_1, J'_2,  \ldots, J'_k\}$, we get
    \begin{enumerate}
    	\item [(i)] $\{ J'_1, J'_2,  \ldots, J'_k\}$ form a cluster,
    	\item  [(ii)] each element of $\displaystyle{\cup_{i=1}^{k} S_{J'_i} }$ has multiplicity greater than or equal to two,
    	\item  [(iii)] each element of $\displaystyle{\cup_{i=1}^{k} S_{J'_i} }$ has \textit{cross-multiplicity} less than or equal to two,
    	\item [(iv)] $j^{i}_{r_i+1}+ j^{i}_{r_i+2}+ \cdots+ j^{i}_{p_i}= -d_{r_i}\;{(\mbox{mod $n$})}$ for $i=1, 2, \ldots ,k.$
    \end{enumerate}
    So the set $\tilde{B}_{P_{k}}$ is the collection of the vectors $ (J'_1, J'_2,  \ldots, J'_k)$, which satisfy the above four conditions.
    Now consider the subset $ B^{'}_{P'_k}$ of $ B_{P'_k}$, where $B_{P'_k}$ is as in the definition \ref{def:B_{P_l}} with $P'_k = (p'_1, p'_2, \ldots, p'_k)$ and $p'_i = 2p_i -r_i$ for $i =1, 2, \ldots, k$.
 Therefore   
    \begin{align}\label{def:B' tilde}
    B^{'}_{P'_k} =  \{ (T_1, T_2,  \ldots, T_k) \in B_{P'_k}  : & \mbox{ \textit{cross-multiplicity} of each entry of }\cup_{i=1}^{k} S_{T_i}  \\
    &	\mbox{ is less than or equal to two}\}. \nonumber
    \end{align}
  Now  by the induction hypothesis, we get
    $$|B^{'}_{P^{'}_{k} }| = o \big(n^{ p^{'}_{1}+p^{'}_{2} + \cdots + p^{'}_k - \frac{k}{2}} \big).$$
    Now we compare the cardinality of $\tilde{B}_{P_{k}}$ and $B^{'}_{P'_{k} }$. Note that both the sets $\tilde{B}_{P_{k}}$ and $B^{'}_{P'_{k} }$ are almost same, the only difference is in the constraints. Recall that, if $(J'_1, J'_2, \ldots, J'_k) \in \tilde{B}_{P_{k}}$, then entries of $J'_i$ has constraints 
    \begin{equation} \label{eqn:constraint non-zero}
    j^{i}_{r_i+1}+ j^{i}_{r_i+2}+ \cdots+ j^{i}_{p_i}= - d_{r_i}\;{(\mbox{mod $n$})} \mbox{ for } i=1, 2, \ldots ,k,
    \end{equation} 
    where $d_{r_i}$ are constants for $i=1, 2, \ldots, k$. 
    
    Recall that, if $(T_1, T_2, \ldots, T_k) \in B^{'}_{P'_{k} }$, then entries of $T_i$ has constraints  
    \begin{equation} \label{eqn:constraint zero}
    t^{i}_{1}+ t^{i}_{2}+ \cdots+ t^{i}_{p'_i}= 0\;{(\mbox{mod $n$})} \mbox{ for } i=1, 2, \ldots ,k.
    \end{equation} 
    Note that, if $d_{r_i} = d_{r_1} \;{(\mbox{mod $n$})} \mbox{ for } i=2, 3, \ldots ,k,$ then the constraints of $\tilde{B}_{P_{k}}$, that is, (\ref{eqn:constraint non-zero}) will be 
    $$j^{i}_{r_i+1}+ j^{i}_{r_i+2}+ \cdots+ j^{i}_{p_i}= - d_{r_1}\;{(\mbox{mod $n$})} \mbox{ for } i=1, 2, \ldots ,k, $$
    which shows that $|\tilde{B}_{P_{k}}| =|B^{'}_{P'_{k} }|$. 
    If $d_{r_i} \neq d_{r_1} \;{(\mbox{mod $n$})} \mbox{ for at least one } i,$ then the entries of $\tilde{B}_{P_{k}}$ have to satisfy more constraints than the constraints on entries of $B^{'}_{P'_{k} }$, that is, $|\tilde{B}_{P_{k}}| \leq |B^{'}_{P'_{k} }|$. Hence 
    \begin{equation} \label{eqn:tilde B B'}
    |\tilde{B}_{P_{k}}| \leq |B^{'}_{P'_{k} }|.
    \end{equation}
    Therefore from (\ref{eqn:B_{P_{k+1}}}) and (\ref{eqn:tilde B B'}), we get
    \begin{align*}
    |B^{'}_{P_{k+1}}|& = |\tilde{B}_{P_{k}}| T_{k+1} \\
    & \leq |B^{'}_{P^{'}_{k}}|  T_{k+1} \\
    &= o \big(n^{p^{'}_{1} +p^{'}_{2} + \cdots + p^{'}_k - \frac{k}{2}} \big) O\big(n^{r'+ \frac{2p_{k+1}-r'}{2}-1 }\big) \\
    &= o \big(n^{( p^{'}_{1} +p^{'}_{2} + \cdots + p^{'}_k - \frac{ k}{2}) + (r'+ \frac{2p_{k+1}-r'}{2}-1 ) }  \big).
    \end{align*}
    Since $p^{'}_i = 2p_i-r_i$ for $i=1, 2, \ldots ,k$ and $ \sum_{i=1}^{k} r_i = r'$, from the last equation, we have
    \begin{align*}
    |B^{'}_{P_{k+1}}| & = o\big(n^{p_1+p_2 + \cdots + p_k + p_{k+1} -\frac{ k}{2}-1}\big)
    =  o\big(n^{p_1+p_2 + \cdots + p_k + p_{k+1} -\frac{k+1}{2} }\big).
    \end{align*}
    Therefore from (\ref{eqn:B_P,B_P'}), we get
    $$|B_{P_{k+1}}| = o\big(n^{p_1+p_2 + \cdots + p_k + p_{k+1} -\frac{ k+1}{2} }\big).$$
    So Lemma \ref{lem:cluster} is true for $\ell=k+1$. This complete the proof of Lemma \ref{lem:cluster}.
\end{proof}


\begin{thebibliography}{10}

\bibitem{adhikari_saha2017}
Kartick Adhikari and Koushik Saha, \emph{Fluctuations of eigenvalues of
  patterned random matrices}, J. Math. Phys. \textbf{58} (2017), no.~6, 063301,
  20. \MR{3666201}

\bibitem{adhikari_saha2018}
\bysame, \emph{Universality in the fluctuation of eigenvalues of random
  circulant matrices}, Statist. Probab. Lett. \textbf{138} (2018), 1--8.
  \MR{3788711}

\bibitem{anderson2006clt}
Greg~W. Anderson and Ofer Zeitouni, \emph{A {CLT} for a band matrix model},
  Probab. Theory Related Fields \textbf{134} (2006), no.~2, 283--338.
  \MR{2222385}

\bibitem{arharov}
L.~V. Arharov, \emph{Limit theorems for the characteristic roots of a sample
  covariance matrix}, Dokl. Akad. Nauk SSSR \textbf{199} (1971), 994--997.
  \MR{0309171}

\bibitem{bai2004clt}
Z.~D. Bai and Jack~W. Silverstein, \emph{C{LT} for linear spectral statistics
  of large-dimensional sample covariance matrices}, Ann. Probab. \textbf{32}
  (2004), no.~1A, 553--605. \MR{2040792}

\bibitem{chatterjee2009fluctuations}
Sourav Chatterjee, \emph{Fluctuations of eigenvalues and second order
  {P}oincar\'{e} inequalities}, Probab. Theory Related Fields \textbf{143}
  (2009), no.~1-2, 1--40. \MR{2449121}

\bibitem{jana2014}
I.~Jana, K.~Saha, and A.~Soshnikov, \emph{Fluctuations of linear eigenvalue
  statistics of random band matrices}, Theory Probab. Appl. \textbf{60} (2016),
  no.~3, 407--443. \MR{3568789}

\bibitem{johansson1998}
Kurt Johansson, \emph{On fluctuations of eigenvalues of random {H}ermitian
  matrices}, Duke Math. J. \textbf{91} (1998), no.~1, 151--204. \MR{1487983}

\bibitem{li2013central}
Lingyun Li and Alexander Soshnikov, \emph{Central limit theorem for linear
  statistics of eigenvalues of band random matrices}, Random Matrices Theory
  Appl. \textbf{2} (2013), no.~4, 1350009, 50. \MR{3149439}

\bibitem{liu2012}
Dang-Zheng Liu, Xin Sun, and Zheng-Dong Wang, \emph{Fluctuations of eigenvalues
  for random toeplitz and related matrices}, Electron. J. Probab \textbf{17}
  (2012), no.~95, 1--22.

\bibitem{lytova2009central}
A.~Lytova and L.~Pastur, \emph{Central limit theorem for linear eigenvalue
  statistics of random matrices with independent entries}, Ann. Probab.
  \textbf{37} (2009), no.~5, 1778--1840. \MR{2561434}

\bibitem{maurya2019process}
Shambhu~Nath Maurya and Koushik Saha, \emph{Process convergence of fluctuations
  of linear eigenvalue statistics of random circulant matrices}, arXiv preprint
  arXiv:1909.00686 (2019).

\bibitem{shcherbina2011central}
M.~Shcherbina, \emph{Central limit theorem for linear eigenvalue statistics of
  the {W}igner and sample covariance random matrices}, Zh. Mat. Fiz. Anal.
  Geom. \textbf{7} (2011), no.~2, 176--192, 197, 199. \MR{2829615}

\bibitem{shcherbina2015}
\bysame, \emph{On fluctuations of eigenvalues of random band matrices}, J.
  Stat. Phys. \textbf{161} (2015), no.~1, 73--90. \MR{3392508}

\bibitem{soshnikov1998tracecentral}
Ya. Sinai and A.~Soshnikov, \emph{Central limit theorem for traces of large
  random symmetric matrices with independent matrix elements}, Bol. Soc.
  Brasil. Mat. (N.S.) \textbf{29} (1998), no.~1, 1--24. \MR{1620151}

\end{thebibliography}

\providecommand{\bysame}{\leavevmode\hbox to3em{\hrulefill}\thinspace}
\providecommand{\MR}{\relax\ifhmode\unskip\space\fi MR }
\providecommand{\MRhref}[2]{%
  \href{http://www.ams.org/mathscinet-getitem?mr=#1}{#2}
}
\providecommand{\href}[2]{#2}

\end{document}